\documentclass[]{article}

\usepackage{lipsum}
\usepackage{amsfonts}
\usepackage{graphicx}
\usepackage{epstopdf}
\usepackage{algorithmic}
\usepackage{hyperref}
\usepackage[margin=1in]{geometry}
\usepackage{amsmath}
\usepackage{cancel}
\usepackage{algorithm}
\usepackage{amsthm}

\usepackage{amsopn}

\usepackage{multicol}

\newcommand{\mybudget}{c_{\rm{tot}}}

\newcommand{\mymf}{\rm{mf}}
\newcommand{\mymc}{\rm{mc}}

\DeclareMathOperator*{\argmin}{arg\,min}

\newcommand{\email}[1]{\protect\href{mailto:#1}{#1}}

\newtheorem{theorem}{Theorem}
\newtheorem{lemma}{Lemma}
\newtheorem{proposition}{Proposition}
\newtheorem{remark}{Remark}

\title{Multifidelity Proper Orthogonal Decomposition}

\author{Nicole Aretz\thanks{Oden Institute for Computational Engineering and Sciences, University of Texas at Austin, Austin, TX 
  (\email{nicole.aretz@austin.utexas.edu}, \email{kwillcox@oden.utexas.edu}
  ).}
\and Karen Willcox\footnotemark[2] 
}

\begin{document}

\maketitle

\begin{abstract}
This paper introduces a multifidelity formulation that reduces the computational cost of the proper orthogonal decomposition (POD) of a high-fidelity model by leveraging data from cheaper, lower-fidelity models.
POD is a prevalent technique for extracting a low-dimensional basis from training data to achieve subsequent dimension reduction or reduced-order modeling.
In scientific and engineering applications, the training data are typically numerical snapshot solutions of a high-fidelity model, and computation of a sufficiently rich snapshot set can be prohibitively expensive, especially when sampling over a high-dimensional parameter space.
Insufficient snapshot training data risks overfitting and poor generalizability of the POD basis to outside the training regime.
Our multifidelity POD (MFPOD) formulation reallocates computational budget to cheaper, low-fidelity models that can be sampled more extensively. 
MFPOD then weights high- and low-fidelity snapshot data via a control-variate formulation to guarantee an unbiased estimate of the expected high-fidelity least-squares projection error.
The MFPOD subspace is chosen to minimize the estimate of this projection error, and converges in probability to the same subspace as single-fidelity POD in the limit of an arbitrarily large budget.
For restrictive computational budgets, the MFPOD cost function has (under some assumptions) lower variance than the POD cost function, which makes the MFPOD subspace more robust against variations in the training data and thus less prone to overfitting.
For a numerical example modeling the velocity of the Pine Island glacier, 
MFPOD achieves the same accuracy as single-fidelity POD with
an order of magnitude reduction in the offline computational cost of snapshot generation.
\end{abstract}

\section{Introduction}
\label{sec:introduction}

We present a multifidelity approach to computing the proper orthogonal decomposition (POD, \cite{lumley1967structure, sirovich1987turbulence}) for a high-fidelity model at reduced computational cost.
POD --- also known as principal component analysis, Karhunen-Lo\'{e}ve expansion, and empirical orthogonal eigenfunctions --- is a powerful data-driven dimensionality reduction technique to identify a hierarchy of POD modes that reconstruct training data optimally in a least squares sense.
The training data are typically numerical solutions (snapshots) of a computational model of the problem at hand and are expensive to compute.
The computational cost of generating training data is often the primary barrier for applying dimension reduction and reduced-order modeling in real-world large-scale scientific applications.
When constrained compute resources only permit a small number of snapshots, the POD space may miss important features.
While low-fidelity models --- such as coarser discretizations or simplified physics --- are cheaper to evaluate and thus permit more snapshots for the same computational cost, a POD space built on their data is restricted by their low accuracy.
In contrast, our multifidelity POD (MFPOD) formulation leverages the correlation between high- and low-fidelity models 
to overcome
the accuracy and snapshot sample size constraints of single-fidelity POD approaches.

Low-fidelity models may include coarser discretizations of a given set of governing equations or approximate models derived by simplifying physical assumptions.
Compared to single-fidelity approaches, multifidelity methods combine data from multiple models to improve estimates of quantities of interest in terms of compute time or accuracy.
Multifidelity methods are widely used in forward \cite{chaudhuri2021mfegra, Croci2023, dodwell2019multilevel, giles2008multilevel, giles2015multilevel, gorodetsky2020generalized, heinrich2001multilevel, Ng2014, peherstorfer2016multifidelity, peherstorfer2016optimal, romor2023multi, Schaden2020} and inverse \cite{cui2024multilevel, cui2016scalable, peherstorfer2019transport, tezzele2023multifidelity} uncertainty quantification to alleviate the cost of sampling.
In the context of constructing reduced-order or surrogate models, low-fidelity models have been used to approximate covariance matrices in polynomial chaos expansions \cite{doostan2007stochastic, mohammadi2019stochastic, raisee2015non}, 
select interpolation points in multifidelity stochastic collocation \cite{chen2014comparison, narayan2014stochastic, zhu2017multi, zhu2014computational},
obtain basis coefficient vectors in bifidelity interpolative decomposition \cite{cutforth2026bi, doostan2016bi, hampton2018practical},
accelerate greedy sampling \cite{ chellappa2020adaptive2,chellappa2022adaptive, feng2024posteriori, feng2023multi}, and improve training in multifidelity regression \cite{conti2024multi,  sella2025projection, shi2020multi}.
While these latter approaches use multifidelity data for model reduction, we focus here on the construction of the POD subspace. In contrast to previous work, we are specifically targeting the upfront computational cost of identifying the POD subspace.
This space can then be paired with any method of choice for subsequent tasks, such as dimension reduction, model reduction, or the multifidelity methods referenced above.

Our MFPOD approach combines the least squares optimization formulation that defines the POD basis with the multifidelity Monte Carlo (MFMC) method \cite{Ng2014, peherstorfer2016optimal}.
Based on control variates, MFMC exploits low-fidelity surrogate models to accelerate the estimation of a statistical quantity of interest (such as the expectation, variance, or sensitivity indices \cite{qian2018multifidelity}) compared to Monte Carlo sampling.
MFMC is compatible with restrictive computational budgets \cite{gruber2023multifidelity}, and has been applied successfully to large-scale uncertainty quantification problems \cite{aretz2025multifidelity, gruber2023multifidelity}.
Related to our MFPOD approach are multifidelity covariance matrix estimators \cite{lam2020multifidelity,pmlr-v202-maurais23a, maurais2025multifidelity}.
Here, \cite{lam2020multifidelity} on multifidelity active subspaces is the most similar approach to ours as it approximates a dominant eigenspace;
however, our approach is distinct through its estimator structure and the active tuning of model weights as the reduced space expands.

The contribution of our work is MFPOD --- a new method that builds a single, high-fidelity POD space from multifidelity data, thus reducing the cost of generating training data for POD. Theoretical analysis of the MFPOD method provides a convergence analysis and numerical results illustrate the realized numerical gains.
Past work has considered related problems but has not tackled this specific challenge.
The multilevel POD method in \cite{behzad2018multilevel} restructures the single-fidelity POD method to achieve a computational speed-up when the number of snapshots is large.
Further, \cite{grassle2018pod, grassle2019pod, ullmann2016pod} perform POD on a variety of discretizations but these are chosen adaptively to guarantee the same level of fidelity throughout all training data rather than explicitly targeting cheap low-fidelity snapshots as in our new MFPOD method.
Greedy sampling approaches, while being rate-optimal (compared to the Kolmogorov $n$-width) for elliptic partial differential equations  \cite{binev2011convergence, dahmen2015best, devore2013greedy} in the limit of arbitrarily many snapshots,
lead to a reduced space dimension that remains limited in the low-budget regime by the number of available snapshots.

The remainder of this paper is organized as follows:
Section~\ref{sec:mfpod} introduces the MFPOD cost function and characterizes it through an eigenvalue problem.
Section~\ref{sec:discrete} provides a discrete MFPOD formulation and an algorithmic description for computing the MFPOD eigenvalues and modes in practice.
We then analyze the multifidelity eigenvalue problem further in Section~\ref{sec:analysis} before demonstrating the MFPOD approach on two numerical examples in Section~\ref{sec:results}.
Finally, Section~\ref{sec:conclusion} concludes the paper.

\section{Multifidelity Proper Orthogonal Decomposition}
\label{sec:mfpod}

This section summarizes key properties of the POD and presents the formulation of the proposed MFPOD method.

\subsection{Proper Orthogonal Decomposition}
For any given reduced dimension $r \in \mathbb{N}$, POD identifies a reduced space $V$, $\dim V = r$, such that the projection $\Pi_V$ onto $V$ optimally reconstructs training data in a least-squares sense.
Here, we specifically consider training \textit{snapshots} $u_0(\theta_1), \dots, u_0(\theta_m)$ in a separable Hilbert space $H$ that are obtained from a parameterized high-fidelity model for $m \in \mathbb{N}$ training parameters $\theta_1, \dots, \theta_m \in \mathbb{R}^d$.
The POD subspace $V$ is then characterized through
\begin{align}\label{eq:POD:min}
    V = \argmin_{V \subset H,\, \dim V = r} \, \frac{1}{m} \sum_{i=1}^m \|u_0(\theta_i)-\Pi_V u_0(\theta_i)\|_H^2.
\end{align}
An appropriate choice for the reduced dimension $r$ and orthonormal basis functions $v_1, \dots, v_r \in H$ of $V$ can be obtained through the method of snapshots:
First, compute the gramian matrix $\mathbf{C} \in \mathbb{R}^{m \times m}$ with entries $[\mathbf{C}]_{i,j} := (u_0(\theta_i), u_0(\theta_j))_H$;
second, identify its eigenvalue-eigenvector pairs $(\lambda_i, \mathbf{w}_i) \in \mathbb{R}_{\ge 0} \times \mathbb{R}^m$, $\mathbf{C} \mathbf{w}_i = \lambda_i \mathbf{w}_i$ such that $\lambda_1 \ge \dots \ge \lambda_m \ge 0$ and $\mathbf{w}_1, \dots, \mathbf{w}_m$ are orthonormal in $\mathbb{R}^m$;
third, using that
\begin{align}\label{eq:POD:min:eigvals}
    \min_{V\subset H, \dim V = r} \, \frac{1}{m} \sum_{i=1}^m \|u_0(\theta_i)-\Pi_V u_0(\theta_i)\|_H^2
    = \sum_{j=r+1}^m \lambda_j,
\end{align}
identify $r \le m$ to meet any target accuracy;
fourth, assemble the basis functions $v_j := \frac{1}{\sqrt{m \lambda_j}} \sum_{i=1}^m [\mathbf{w}_j]_i \, u_0(\theta_i)$ for $j=1,2,\ldots,r$ and set $V := \text{span} \{v_1, \dots, v_r \} \subset H$.

\subsection{POD cost function}
We now consider the training parameters $\theta_1, \dots, \theta_m$ to be independent and identically distributed (i.i.d.)~samples $\theta_i : \Omega \rightarrow \mathbb{R}^d$ defined on a probability space $(\Omega, \mathcal{F}, \mathbb{P})$, each with distribution $\mu$.
In this setting, the cost function 
\begin{align}
    \mathcal{J}_{\mymc}(V) := \frac{1}{m} \sum_{i=1}^m \|u_0(\theta_i)-\Pi_V u_0(\theta_i)\|_H^2
\end{align}
from the POD minimization problem \eqref{eq:POD:min} is the Monte Carlo approximation of the expected projection error
\begin{align}
    \mathcal{J}(V) := \mathbb{E}_{\theta \sim \mu} \left[ \|u_0(\theta)-\Pi_V u_0(\theta)\|_H^2 \right].
\end{align}
As such, its mean squared error (MSE) is
\begin{align}\label{eq:POD:mse}
    \text{MSE}[\mathcal{J}_{\mymc}(V)] 
    &\hspace{-0.2em}:=\hspace{-0.2em} \mathbb{E}_{\theta_1, \dots, \theta_m \sim \mu}\hspace{-0.25em} \left[(\mathcal{J}_{\mymc}(V)\hspace{-0.2em} - \hspace{-0.2em}\mathcal{J}(V))^2\right] \hspace{-0.25em}
    = \hspace{-0.25em} \frac{1}{m} \mathbb{V}_{\hspace{-0.1em}\theta \sim \mu}(\|u_0(\theta)\hspace{-0.2em}-\hspace{-0.2em}\Pi_V u_0(\theta)\|_H^2\hspace{-0.2em})
\end{align}
where $\mathbb{V}_{\theta \sim \mu}(\|u_0(\theta)-\Pi_V u_0(\theta)\|_H^2)$ denotes the variance of the map $\theta \mapsto \|u_0(\theta)-\Pi_V u_0(\theta)\|_H^2$ with respect to the distribution $\mu$.
Note that, in order for the variance to be finite for all subspaces $V \subset H$, we need $\|u_0(\theta)\|_H$ to be fourth-order integrable over the parameter domain $\text{supp}(\mu)$ w.r.t.\ $\mu$, i.e.
$\mathbb{E}_{\theta \sim \mu}[\|u_0(\theta)\|_H^4] < \infty$.
The expression \eqref{eq:POD:mse} reveals a fundamental challenge for the POD: 
The $1/m$ decay of the MSE w.r.t.\ the number of samples $m$.
Generally, choosing the sample size $m$ too small in the POD optimization \eqref{eq:POD:min} for $V$ bears the risks of overfitting and a high generalization error outside the training regime.
However, in practice the sample size $m$ cannot be chosen arbitrarily large due to the cost (e.g., CPUh) of generating the snapshots:
If $c_0 > 0$ is the cost of obtaining $u_0(\theta)$ for any one $\theta \in \mathbb{R}^d$ and there is a finite sampling budget $\mybudget < \infty$, then the sample size $m$ is naturally restricted by $\lfloor \mybudget / c_0 \rfloor$.

\subsection{Multifidelity POD cost function}

The core idea of our MFPOD method is that using the MFMC method \cite{Ng2014, peherstorfer2016optimal} we can define a multifidelity cost function $\mathcal{J}_{\mymf}$ that is statistically unbiased 
\begin{align}\label{eq:MFPOD:unbiased}
    \mathbb{E}[\mathcal{J}_{\mymf}(V)] &= \mathbb{E}[\mathcal{J}(V)] = \mathbb{E}_{\theta \sim \mu}[\|u_0(\theta)-\Pi_V u_0(\theta)\|_H^2]
\end{align}
for all subspaces $V \subset H$,
and has --- under some assumptions --- a smaller MSE than the Monte Carlo cost function $\mathcal{J}_{\mymc}$,
\begin{align}
    \text{MSE}[\mathcal{J}_{\mymf}(V)] 
    &= \mathbb{E}[(\mathcal{J}_{\mymf}(V) - \mathcal{J}(V))^2] \le \text{MSE}[\mathcal{J}_{\mymc}(V)].
\end{align}
With $\mathcal{J}_{\mymf}$ we then define an MFPOD minimization problem
\begin{align}\label{eq:MFPOD:min}
    V = \argmin_{V \subset H,\, \dim V=r} \mathcal{J}_{\mymf}(V)
\end{align}
to obtain our MFPOD subspace.
As long as $\text{MSE}[\mathcal{J}_{\mymf}(V)] \le \text{MSE}[\mathcal{J}_{\mymc}(V)]$, the basis $V$ obtained through \eqref{eq:MFPOD:min} is less prone to overfitting than the standard POD.


To define the multifidelity cost function $\mathcal{J}_{\mymf}$, we leverage snapshots from $L \ge 1$ low-fidelity models that are both cheaper to evaluate but also of lower accuracy than the high-fidelity model itself.
These low-fidelity models might, for example, stem from coarser numerical discretizations or simplified physics;
formally we only require that they map input parameters $\theta \in \text{supp} (\mu) \subset \mathbb{R}^d$ to states $u_{\ell}(\theta)$ in the Hilbert space $H$ with the integer index $\ell$, $1 \le \ell \le L$, indicating the model number.
We assume without loss of generality that models are numbered by their costs $c_{\ell}$ such that $c_0 > c_1 \ge \dots \ge c_L > 0$.
For sample sizes $m_0 < m_1 < \dots < m_L$ and weights $\alpha_1, \dots, \alpha_L \in \mathbb{R}$, we then define the multifidelity cost function
\begin{equation} \label{eq:Jmf:indefinite}
\begin{aligned}
    \mathcal{J}_{\mymf}(V) &:= \frac{1}{m_0} \sum_{i=1}^{m_0} \|u_0(\theta_i)-\Pi_V u_0(\theta_i)\|_H^2 \\
    &+ \sum_{\ell=1}^L \left(
    \frac{\alpha_{\ell}}{m_{\ell}} \sum_{i=1}^{m_{\ell}} \|u_{\ell}(\theta_i)-\Pi_V u_{\ell}(\theta_i)\|_H^2
    - \frac{\alpha_{\ell}}{m_{\ell-1}} \sum_{i=1}^{m_{\ell-1}} \|u_{\ell}(\theta_i)-\Pi_V u_{\ell}(\theta_i)\|_H^2
    \right)
\end{aligned}
\end{equation}
with i.i.d.\ samples $\theta_1, \dots, \theta_{m_L} \sim \mu$.
Note that, as in MFMC, samples are shared between the different sums; the total sampling cost for $\mathcal{J}_{\mymf}$ is hence $\sum_{\ell=0}^L m_{\ell} c_{\ell}$.
Applying the expectation over $\theta_1, \dots, \theta_{m_L} \sim \mu$ to $\mathcal{J}_{\mymf}(V)$ immediately yields \eqref{eq:MFPOD:unbiased}.

Following \cite{peherstorfer2016optimal}, Lemma 3.3, the MSE of $\mathcal{J}_{\mymf}(V)$ is
\begin{align}\label{eq:MFPOD:MSE}
    \text{MSE}(\mathcal{J}_{\mymf}(V))
    &= \frac{\sigma_0^2(V)}{m_0} + \sum_{\ell=1}^L \bigg(\frac{1}{m_{\ell-1}}-\frac{1}{m_{\ell}} \bigg) \bigg(\alpha_{\ell}^2 \sigma_{\ell}^2(V) - 2 \alpha_{\ell} \sigma_{0, \ell}(V) \bigg)
\end{align}
with the model variances and covariances
\begin{align*}
    \sigma_{\ell}^2(V) &:= \mathbb{V}_{\theta \sim \mu} \left(  \|u_{\ell}(\theta)-\Pi_V u_{\ell}(\theta)\|_H^2 \right), \\
    \sigma_{0, \ell}(V) &:= \text{Cov}_{\theta \sim \mu} \left(\|u_{0}(\theta)-\Pi_V u_{0}(\theta)\|_H^2, \|u_{\ell}(\theta)-\Pi_V u_{\ell}(\theta)\|_H^2 \right),
\end{align*}
defined for $\ell=0,1,\ldots,L$.
In \eqref{eq:MFPOD:MSE}, $\frac{1}{m_{\ell-1}}-\frac{1}{m_{\ell}} > 0$ for $\ell = 1, \dots, L$ because the sample sizes $m_0 < m_1 < \dots < m_L$ are increasing.
Therefore, for $\ell = 1, \dots, L$, the $\ell$-th low-fidelity model 
contributes towards reducing the MSE of $\mathcal{J}_{\mymf}(V)$ iff $\alpha_{\ell}^2 \sigma_{\ell}^2(V) - 2 \alpha_{\ell} \sigma_{0, \ell}(V) < 0$.
The optimal choice for $\alpha_{\ell}$,  $\ell=1,2,\ldots,L$ is hence
\begin{align}\label{eq:MFPOD:alphastar}
    \alpha_{\ell}^*(V) := \left\{
    \begin{array}{ll}
        \sigma_{0, \ell}(V) / \sigma_{\ell}^2(V) & \text{if } \sigma_{\ell}^2(V)>0, \\
        0 & \text{if } \sigma_{\ell}^2(V) = 0.
    \end{array}
    \right.
\end{align}
With the choice $\alpha_{\ell}=\alpha_{\ell}^*(V)$, $\ell = 1, \dots, L$, the MSE of $\mathcal{J}(V)$ is minimal and given by
\begin{align}\label{eq:MFPOD:mse:min}
    \text{MSE}(\mathcal{J}_{\mymf}(V))
    &= \frac{\sigma_0^2(V)}{m_0} - \hspace{-0.5em}\sum_{\substack{\ell=1 \\ \sigma^2_{\ell}(V) \neq 0}}^L \hspace{-0.5em}\bigg(\frac{1}{m_{\ell-1}}-\frac{1}{m_{\ell}} \bigg)
    \frac{\sigma_{0, \ell}^2(V)}{\sigma_{\ell}^2(V)}
    \, \le \, \frac{\sigma_0^2(V)}{m_0}.
\end{align}
In the worst case, $\text{MSE}(\mathcal{J}_{\mymf}(V))$ in \eqref{eq:MFPOD:mse:min} equals the MSE of Monte Carlo sampling with $m_0$ high-fidelity samples.
In a comparison at the same computational budget $\mybudget = \sum_{\ell = 0}^L c_{\ell}m_{\ell}$, the Monte Carlo approximation may use $m_{\mymc} = \lfloor \mybudget / c_0 \rfloor$ samples of the high-fidelity model, yielding the MSE $\sigma_0^2(V) / m_{\mymc}$.
With optimal control variate weights $\alpha_{\ell} = \alpha_{\ell}^*(V)$, $\ell=1,2,\ldots,L$, the multifidelity cost function $\mathcal{J}_{\mymf}(V)$ achieves a smaller MSE than Monte Carlo sampling for the same computational budget iff
\begin{align}
    1 - \sum_{\ell=1}^L (\frac{m_0}{m_{\ell-1}}-\frac{m_0}{m_{\ell}}) \frac{\sigma_{0, \ell}^2(V)}{\sigma_{\ell}^2(V)} < \frac{m_0}{m_{\mymc}}.
\end{align}
This formula can be used to evaluate if a low-fidelity model is beneficial for the MFPOD cost function $\mathcal{J}_{\mymf}(V)$, or if it would be more favorable to re-allocate its computational budget to the remaining models.

\subsection{Multifidelity POD optimization}\label{sec:MFPOD-opt}

The optimization problem \eqref{eq:MFPOD:min} of identifying a reduced space $V \subset H$, $\dim V = r$, that minimizes $\mathcal{J}_{\mymf}(V)$ can be solved through an eigenvalue problem.
To this end, define $\mathcal{C}_{\mymf} : H \rightarrow H$ through
\begin{equation}
\begin{aligned}\label{eq:Cmf}
    \mathcal{C}_{\mymf} v &:= \frac{1}{m_0} \sum_{i=1}^{m_0} (u_0(\theta_i), v)_H u_0(\theta_i) \\
    &\quad+\sum_{\ell=1}^L \left(
    \frac{\alpha_{\ell}}{m_{\ell}} \sum_{i=1}^{m_{\ell}} (u_{\ell}(\theta_i), v)_H u_{\ell}(\theta_i)
    - \frac{\alpha_{\ell}}{m_{\ell-1}} \sum_{i=1}^{m_{\ell-1}} (u_{\ell}(\theta_i), v)_H u_{\ell}(\theta_i)
    \right),
\end{aligned}
\end{equation}
and note that $\mathcal{C}_{\mymf}$ is linear, self-adjoint, and compact.
The operator $\mathcal{C}_{\mymf}$ connects to the MFPOD optimization \eqref{eq:MFPOD:min} through the following lemma which we prove in Appendix~\ref{sec:appendix}.

\begin{lemma}\label{thm:costfunction}
    Define $\mathcal{C}_{\mymf} : H \rightarrow H$ through \eqref{eq:Cmf} and let $(\lambda_j, v_j) \in \mathbb{R} \times H$, $1 \le j \le \dim H \in \mathbb{N} \cup \{\infty\}$, be its eigenvalue-eigenfunction pairs, ordered such that $\lambda_1 \ge \dots \ge \lambda_r \ge \sup_{j > r} \lambda_j$ and $\{v_j\}_{j=1}^{\dim H}$ orthonormal. 
    Then 
    \begin{align}\label{eq:MFPOD:series}
    \mathcal{J}_{\mymf}(V) &= \sum_{j=1}^{\dim H} \lambda_j (1-\| \Pi_V v_j\|_H^2).
    \end{align}
    In particular, $\mathcal{J}_{\mymf}(V)$ is minimized by $\text{span}(v_1, \dots, v_r)$, and this minimizer is unique iff $\lambda_r > \sup_{j > r} \lambda_j$.
\end{lemma}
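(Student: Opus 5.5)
The plan is to write $\mathcal{J}_{\mymf}(V)$ as a trace-type expression in $\mathcal{C}_{\mymf}$, expand in the eigenbasis, and then solve the resulting weighted optimization over the numbers $\|\Pi_V v_j\|_H^2$ with a Ky Fan type argument.

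\textbf{Step 1: rewrite each projection error.} Since $\Pi_V$ is an orthogonal projection, for any $u\in H$ we have $\|u-\Pi_V u\|_H^2=\|u\|_H^2-\|\Pi_V u\|_H^2$. Fix an orthonormal basis $\{e_k\}_{k=1}^r$ of $V$. Then $\|\Pi_V u\|_H^2=\sum_k (u,e_k)_H^2$, and inserting this into \eqref{eq:Jmf:indefinite} gives
\begin{align*}
\mathcal{J}_{\mymf}(V)=\mathcal{J}_{\mymf}(\{0\})-\sum_{k=1}^r (\mathcal{C}_{\mymf}e_k,e_k)_H .
\end{align*}
This uses that $(\mathcal{C}_{\mymf}v,v)_H$ is exactly the same signed combination of the terms $(u_\ell(\theta_i),v)_H^2$ that appears in the cost function.

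\textbf{Step 2: expand in the eigenbasis.} $\mathcal{C}_{\mymf}$ has finite rank, is self-adjoint, and is nonzero only on the span of the finitely many snapshots. Hence its eigenfunctions $\{v_j\}$ form an orthonormal basis of $H$ (eigenvalue $0$ on the orthogonal complement), and only finitely many $\lambda_j$ are nonzero. It is also trace class, and
\begin{align*}
\mathcal{J}_{\mymf}(\{0\})=\operatorname{tr}\mathcal{C}_{\mymf}=\sum_j\lambda_j ,
\end{align*}
which one checks by evaluating the trace in any orthonormal basis and using $\sum_j (u,v_j)_H^2=\|u\|_H^2$. Likewise,
\begin{align*}
\sum_k(\mathcal{C}_{\mymf}e_k,e_k)_H=\sum_k\sum_j\lambda_j (e_k,v_j)_H^2=\sum_j\lambda_j\|\Pi_V v_j\|_H^2 ,
\end{align*}
where the interchange of sums is harmless because all sums are finite. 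Combining these yields \eqref{eq:MFPOD:series}.

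\textbf{Step 3: minimize.} Set $p_j:=\|\Pi_V v_j\|_H^2$. Then $0\le p_j\le 1$ and $\sum_j p_j=\operatorname{tr}\Pi_V=r$, so minimizing $\mathcal{J}_{\mymf}$ amounts to maximizing $\sum_j\lambda_j p_j$ over such weights. Let $\lambda^*:=\sup_{j>r}\lambda_j$, which may be negative because the $\alpha_\ell$ terms make $\mathcal{C}_{\mymf}$ indefinite. Then
\begin{align*}
\sum_j\lambda_jp_j-\sum_{j\le r}\lambda_j=\sum_{j\le r}(\lambda_j-\lambda^*)(p_j-1)+\sum_{j>r}(\lambda_j-\lambda^*)p_j\le 0 .
\end{align*}
To obtain this identity one adds and subtracts $\lambda^*\sum_j p_j=\lambda^* r$. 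The series over $j>r$ converges because $\sum_j p_j=r<\infty$ and the $\lambda_j$ are bounded. Equality holds for $V=\operatorname{span}(v_1,\dots,v_r)$, which proves minimality.

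\textbf{Step 4: uniqueness, the main obstacle.} Care is needed with the supremum not being attained and with negative or zero eigenvalues.

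If $\lambda_r>\lambda^*$, equality in Step 3 forces $p_j=1$ for every $j\le r$. Since $\dim V=r$, this gives $V=\operatorname{span}(v_1,\dots,v_r)$.

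Conversely, suppose $\lambda_r=\lambda^*$. If the supremum is attained at some $j_0>r$, then swapping $v_r$ for $v_{j_0}$ gives another minimizer. If it is not attained, then $\lambda^*$ must be an accumulation value. Because only finitely many eigenvalues are nonzero, this forces $\lambda^*=0$, and in that case $\lambda^*$ is in fact attained by some zero eigenvalue with $j>r$ whenever $\dim H$ is large enough. So the swap argument again applies.
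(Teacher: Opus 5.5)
Your proof is correct. Steps 1 and 2 follow essentially the paper's argument. Like the paper, you expand in the orthonormal eigenbasis, which exists by finite rank and separability, and you use that only finitely many $\lambda_j$ are nonzero to interchange sums. The one difference is bookkeeping. The paper writes $\mathcal{J}_{\mymf}(V)=\sum_j(\mathcal{C}_{\mymf}\Pi_{V_\perp}v_j,\Pi_{V_\perp}v_j)_H$ directly. You instead split off $\operatorname{tr}\mathcal{C}_{\mymf}=\mathcal{J}_{\mymf}(\{0\})$ and compute the captured part in a basis of $V$. The paper's form does not need a finite basis of $V$, while yours makes the relation to the trace explicit.

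Where you go beyond the paper is Steps 3 and 4. The paper's proof stops once the series identity is established. It never argues the ``in particular'' claims about minimality and uniqueness, so your weight argument supplies something the paper leaves unproved. The weights $p_j=\|\Pi_V v_j\|_H^2\in[0,1]$ with the exact constraint $\sum_j p_j=r$ are the right tool. It matters that you use equality and subtract $\lambda^* r$ exactly: in finite dimensions $\mathcal{C}_{\mymf}$ can be indefinite and $\lambda^*$ negative, so a bound using only $\sum_j p_j\le r$ would not suffice.

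Step 4 can be simplified. The set $\{\lambda_j : j>r\}$ takes only finitely many values, namely the finitely many nonzero eigenvalues and possibly $0$. Its supremum is therefore always a maximum, so the ``not attained'' case is vacuous. You can drop the accumulation-point discussion and the hedge ``whenever $\dim H$ is large enough'', and go straight to the swap argument whenever $\lambda_r=\lambda^*$.
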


Lemma \ref{thm:costfunction} characterizes the MFPOD minimization \eqref{eq:MFPOD:min} through the eigenvalue-eigenfunction pairs $(\lambda_j, v_j) \in \mathbb{R} \times H$ of $\mathcal{C}_{\mymf}$. 
In Section \ref{sec:analysis}, we show that, if the sampling ratios $m_{\ell} / m_0$ defined for $\ell = 0, \dots, L$ are fixed integers, then in the limit of an aribitrarily large sampling buget $\mybudget = \sum_{\ell=0}^L m_{\ell}c_{\ell} \rightarrow \infty$ the multifidelity operator $\mathcal{C}_{\mymf}$ converges in probability in the Hilbert-Schmidt norm to the second-moment operator
\begin{align}\label{eq:C}
    \mathcal{C} &: H \rightarrow H, &\mathcal{C}v &:= \mathbb{E}_{\theta \sim \mu}[(u_0(\theta), v)_H u_0(\theta)].
\end{align}
The operator $\mathcal{C}$ is linear, self-adjoint, and positive semi-definite by definition.
$\mathcal{C}$ is also trace-class if $\mathbb{E}[\|u_0(\theta)\|_H^2] < \infty$. 
Letting $(\lambda_j^*, v_j^*) \in \mathbb{R}_{\ge 0} \times H$, for integers $j$ with $1 \le j \le \dim H$, be eigenvalue-eigenfunction pairs of $\mathcal{C}$ such that $\{v_j^*\}_{j=1}^{\dim H}$ are orthonormal and $\lambda_1^* \ge \lambda_2^* \ge \dots \ge 0$, 
we show in Section \ref{sec:analysis} that
\begin{align}
\mathbb{E}\left[\bigg(\sum_{j=1}^r \lambda_j^* - \sum_{j=1}^r \lambda_j\bigg)^2 \right] & \in \mathcal{O}\bigg(\frac{r}{\mybudget}\bigg).
\label{eq:bound:sec2:eigvals}
\end{align}
That is, the mean squared error of $\sum_{j=1}^r \lambda_j$ approximating $\sum_{j=1}^r \lambda_j^*$ decays to zero at a (worst case) rate inversely proportional to the sampling budget $\mybudget = \sum_{\ell=0}^L m_{\ell}c_{\ell}$.
Moreover, if $\lambda_{r}^* > \lambda_{r+1}^*$ then the mean squared residual error 
\begin{align}\label{eq:bound:sec2:eigvecs}
\mathbb{E}\left[ \bigg(\sum_{j=1}^r \|v_j - \Pi_{V_*}v_j\|_H^2\bigg)^2
        \right] 
        &\in \mathcal{O}\bigg( \frac{r}{\mybudget(\lambda_{r}^*-\lambda_{r+1}^*)^2}\bigg), 
\end{align}
between the multifidelity eigenfunctions $v_1, \dots, v_r$ and their projections onto $V_* := \text{span}(v_1^*, \dots, v_r^*)$
also converges to zero for $\mybudget \rightarrow \infty$ at the (worst case) rate $\mathcal{O}(r/\mybudget)$, albeit with a constant scaling as $(\lambda_{r}^*-\lambda_{r+1}^*)^{-2}$ in the spectral gap $\lambda_{r}^*-\lambda_{r+1}^* > 0$.
Consequently, in the limit of an arbitrarily large budget, the subspace $\text{span}(v_1, \dots, v_r)$ identified in the MFPOD minimization \eqref{eq:MFPOD:min} recovers $V_*$.
This asymptotic convergence result provides a theoretical foundation for MFPOD, although we note that the practical motivation and benefit of MFPOD is for small computational budgets where single-fidelity POD is infeasible.

While the minimization \eqref{eq:MFPOD:min} of $\mathcal{J}_{\mymf}(V)$ is justified in expectation and in the limit $\mybudget \rightarrow \infty$ as analyzed in Section \ref{sec:analysis}, for any single draw of $\theta_1, \dots, \theta_{m_L}$ there is the practical issue that $\mathcal{C}_{\mymf}$ may have negative eigenvalues.
Because each $\lambda_j$, $1 \le j \le \dim H$, estimates a non-negative expectation $\lambda_j \approx \mathbb{E}_{\theta \sim \mu}[(u_{0}(\theta), v_j)_H^2] \ge 0$, any negative value $\lambda_j< 0$ indicates insufficient accuracy of the multifidelity estimator at $v_j$.
However, it also indicates that $v_j$ carries information for at least some of the low-fidelity models if not for the high-fidelity model itself.
To leverage this information, we introduce an adjusted, non-negative multifidelity cost function
\begin{align}
    \mathcal{J}_{\mymf}^+(V) &:= \sum_{j=1}^{\dim H} \lambda_j^+ \| \Pi_{V_{\perp}} v_j\|_H^2 
    = \sum_{j=1}^{\dim H} \lambda_j^+ (1-\| \Pi_{V} v_j\|_H^2 )
\end{align}
where $\lambda_j^+ \ge 0$ is defined via
\begin{align}\label{eq:MFPOD:lambdajplus}
    \lambda_j^+ &:= \left\{
    \begin{array}{ll}
        \lambda_j & \text{if } \lambda_j > 0 \\
        0 & \text{if } \lambda_j = 0 \text{ and } v_j \perp S \\
        \frac{1}{m_0} \sum_{i=1}^{m_0} (u_0(\theta_i), v_j)_H^2 & \text{otherwise,}
    \end{array}
    \right.
\end{align}
with $S := \text{span}\left< u_0(\theta_1), \dots, u_0(\theta_{m_0}), u_1(\theta_1), \dots, u_L(\theta_{m_L}) \right>$ being the span of all snapshots.
The adjusted cost function $\mathcal{J}_{\mymf}^+(V)$ coincides with $\mathcal{J}_{\mymf}(V)$ in all directions $v_j$ with positive curvature, while correcting the remaining directions $v_j \, \cancel{\perp} \, S$ using the Monte Carlo approximation.
Note that both the Monte Carlo and the MFMC approximations of $\mathbb{E}_{\theta \sim \mu}[(u_{0}(\theta), v_j)_H^2]$ are zero for any $v_j \perp S$ orthogonal to all snapshots.

Ordering the pairs $(\lambda_j^+, v_j) \in \mathbb{R}_{\ge 0} \times H$ such that $\lambda_1^+ \ge \lambda_2^+ \ge \dots \ge 0$, we characterize our MFPOD subspace of dimension $r$ through
\begin{align}\label{eq:MFPOD:min:plus}
    V_{\mymf} = \argmin_{V \subset H, \dim V = r} \mathcal{J}_{\mymf}^+(V).
\end{align}
This minimization is solved by $V_{\mymf} := \text{span}\{v_1, \dots, v_r\}$,
and $V_{\mymf}$ is the unique minimizer of \eqref{eq:MFPOD:min:plus} iff $\lambda_{r}^+ > \lambda_{r+1}^+$.
Analogously to single-fidelity POD, the remainder
\begin{align}
    \mathcal{J}_{\mymf}^+(V_{\mymf}) = \sum_{j=r+1}^{\dim H} \lambda_j^+ 
    \, \approx \, \mathbb{E}_{\theta \sim \mu}[\|u_{0}(\theta)-\Pi_{V_{\mymf}} u_{0}(\theta)\|_H^2]
\end{align}
can be used to determine $r$ for any target accuracy.

\section{Discrete MFPOD}
\label{sec:discrete}
Before analyzing the properties of our MFPOD method in Section \ref{sec:analysis}, we present the algorithm in discrete form. We then analyze its computational cost, discuss generalization to multiple low-fidelity models, and present an approach to adapt the control variate weights.

\subsection{MFPOD Algorithm}
To keep our exposition of the MFPOD algorithm as clear as possible, we show the two-model setting in Euclidean space $\mathbb{R}^n$ with $\mathbf{u}_{0}(\theta), \mathbf{u}_{1}(\theta) \in \mathbb{R}^n$ denoting respectively our high- and low-fidelity states for parameters $\theta \in \text{supp}(\mu)$.
For a given reduced dimension $r \ll n$ and sampling budget $\mybudget > 0$, our goal is to identify the POD basis matrix
\begin{align}\label{eq:dmfpod:Vmf}
    \mathbf{V} = \left[
    \mathbf{v}_1, \dots, \mathbf{v}_r
    \right] \in \mathbb{R}^{n \times r}, \, \mathbf{V}^{\top} \mathbf{V} = \mathbf{I}_r
\end{align}
with orthonormal columns
that minimizes the expected least-squares error
\begin{align*}
    \mathbb{E}_{\theta \sim \nu}\left[ 
    \|\mathbf{u}_{0}(\theta)-\mathbf{V}\mathbf{V}^{\top}  \mathbf{u}_{0}(\theta)\|^2
    \right] \approx \mathcal{J}_{\mymf}^+(\mathbf{V}) \ge 0
\end{align*}
approximated by the multifidelity cost function $\mathcal{J}_{\mymf}^+(\mathbf{V})$.

\begin{remark}
    In our notation from Section \ref{sec:mfpod}, we have $u_0(\theta) := \mathbf{u}_{0}(\theta) \in \mathbb{R}^n =: H$, $u_1(\theta):= \mathbf{u}_{1}(\theta) \in H$, $\Pi_V := \mathbf{V}\mathbf{V}^{\top}$, and $L=1$.
    The generalization to multiple low-fidelity models $L > 1$ is provided in Section \ref{sec:discrete:generalization}.
\end{remark}

\begin{algorithm}
\caption{Discrete MFPOD}
\label{alg:mfpod:discrete}
\begin{algorithmic}[1]
\STATE \textbf{Dependencies:} sampling function for probability distribution $\mu$, snapshot evaluation functions $\theta \mapsto \mathbf{u}_0(\theta)$ and $\theta \mapsto \mathbf{u}_1(\theta)$ of the high- and low-fidelity models
\STATE \textbf{Input:} sample sizes $1 \le m_0 < m_1$, weight $\alpha_1 \in \mathbb{R}$, cumulative energy threshold $0 < \kappa < 1$

\hrulefill
\item[]\texttt{\textbackslash \textbackslash ~ step 1: snapshots}
\STATE Draw $m_1$ i.i.d. samples $\theta_1, \dots, \theta_{m_1} \sim \mu$ \label{l:draw-samples}
\STATE Compute high-fidelity snapshot matrix
$\mathbf{S}_0 \gets \left[\mathbf{u}_0(\theta_1), \dots, \mathbf{u}_0(\theta_{m_0})\right] \in \mathbb{R}^{n \times m_0}$ \label{l:snapshots:high}
\STATE Compute low-fidelity snapshot matrices
$\mathbf{S}_1 \gets \left[\mathbf{u}_1(\theta_1), \dots, \mathbf{u}_1(\theta_{m_0})\right] \in \mathbb{R}^{n \times m_0}$ \label{l:snapshots:low}\\
and $\mathbf{S}_+ \gets \left[\mathbf{u}_1(\theta_{m_0 + 1}), \dots, \mathbf{u}_1(\theta_{m_1})\right] \in \mathbb{R}^{n \times (m_1-m_0)}$ 
\vspace{0.5em}
\item[]\texttt{\textbackslash \textbackslash ~ step 2: eigenvalues}
\STATE Set up matrix action \label{l:Cmf} \\
$\mathbf{v} \mapsto \mathbf{C}_{\mymf} \mathbf{v} := \frac{1}{m_0} \mathbf{S}_0 (\mathbf{S}_0^{\top}\mathbf{v}) + (\frac{\alpha_1}{m_1}-\frac{\alpha_1}{m_0}) \mathbf{S}_1 (\mathbf{S}_1^{\top}\mathbf{v}) + \frac{\alpha_1}{m_1} \mathbf{S}_+ (\mathbf{S}_+^{\top}\mathbf{v})$
\STATE Identify the $r_{\max} \le m_0+m_1$ eigenvalue-eigenvector pairs $(\lambda_j, \mathbf{v}_j) \in \mathbb{R} \times \mathbb{R}^n$, $\mathbf{C}_{\mymf} \mathbf{v}_j = \lambda_j \mathbf{v}_j$ for which $\lambda_j \neq 0$\label{l:eigenvalues}
\STATE \textbf{for} $j=1, \dots, r_{\max}$: 
    \textbf{if} $\lambda_j > 0$ \textbf{then} $\lambda_j^+ \gets \lambda_j$ \textbf{else} $\lambda_j^+ \gets \frac{1}{m_0} (\mathbf{S}_0^{\top}\mathbf{v}_j)^{\top} (\mathbf{S}_0^{\top}\mathbf{v}_j)$ \label{l:correction}
\vspace{0.5em}
\item[]\texttt{\textbackslash \textbackslash ~ step 3: Reduced space}
\STATE Re-order $\{(\lambda_j^+, \mathbf{v}_j)\}_{j=1}^{r_{\max}}$ such that $\lambda_1^+ \ge \lambda_2^+ \ge \dots \ge \lambda_{r_{\max}}^+ \ge 0$ \label{l:ordering}
\STATE Choose $r \le r_{\max}$ minimal with $\sum_{j=1}^{r} \lambda_j^+ \ge \kappa \sum_{j=1}^{r_{\max}} \lambda_j^+ $ \label{l:r}
\STATE Set $\mathbf{V} \gets [\mathbf{v}_1, \dots, \mathbf{v}_r]$ \label{l:V}
\vspace{0.5em}
\RETURN $\mathbf{V}$
\end{algorithmic}
\end{algorithm}

Algorithm \ref{alg:mfpod:discrete} summarizes the computational procedure of building the MFPOD subspace $\mathbf{V}_{\mymf}$ from states $\mathbf{u}_0(\theta)$ and $\mathbf{u}_1(\theta)$ of the high- and low-fidelity models for samples from the probability distribution $\mu$.
Inputs to Algorithm \ref{alg:mfpod:discrete} are high- and low-fidelity sample sizes $1 \le m_0 < m_1$, a control variate weight $\alpha_1 \in \mathbb{R}$, and a cumulative energy threshold $\kappa \in (0, 1)$ for determining the final reduced dimension.
The first step of Algorithm \ref{alg:mfpod:discrete} is to draw the samples $\theta_1, \dots, \theta_{m_1}$ (line \ref{l:draw-samples}) and compute the corresponding snapshots of the high- and low-fidelity models (lines \ref{l:snapshots:high}, \ref{l:snapshots:low}).
Note that the first $m_0$ samples $\theta_1, \dots, \theta_{m_0}$ are shared between the two models and are saved in
\begin{align}
    \mathbf{S}_0 &:= \left[\mathbf{u}_0(\theta_1), \dots, \mathbf{u}_0(\theta_{m_0})\right] \in \mathbb{R}^{n \times m_0}, &&
    \mathbf{S}_1 := \left[\mathbf{u}_1(\theta_1), \dots, \mathbf{u}_1(\theta_{m_0})\right] \in \mathbb{R}^{n \times m_0}
\end{align}
separately from the snapshot matrix
\begin{align}\label{eq:Splus}
    \mathbf{S}_+ &:= \left[\mathbf{u}_1(\theta_{m_0 + 1}), \dots, \mathbf{u}_1(\theta_{m_1})\right] \in \mathbb{R}^{n \times (m_1-m_0)}
\end{align}
containing the low-fidelity states at the additional samples $\theta_{m_0+1}, \dots, \theta_{m_1}.$

The second step of Algorithm \ref{alg:mfpod:discrete} is to identify the multifidelity modes $\{\mathbf{v}_j\}_{j=1}^{m_0+m_1}$ and their corresponding approximations $\lambda_j^+ \approx \mathbb{E}_{\theta \sim \mu}[(\mathbf{v}_j^{\top} \mathbf{u}_0(\theta))^2] \ge 0$ to their expected captured energy.
The modes $\mathbf{v}_j \in \mathbb{R}^n$ and first guesses $\lambda_j$ for $\mathbb{E}_{\theta \sim \mu}[(\mathbf{v}_j^{\top} \mathbf{u}_0(\theta))^2]$ are computed as the eigenvalue-eigenvector pairs $(\lambda_j, \mathbf{v}_j) \in \mathbb{R} \times \mathbb{R}^n$ of the multifidelity second moment matrix
\begin{align*}
    \mathbf{C}_{\mymf} 
    :=& \frac{1}{m_0} \mathbf{S}_0 \mathbf{S}_0^{\top}
    + \frac{\alpha_1}{m_1} \big[\mathbf{S}_1, \mathbf{S}_+ \big] \big[\mathbf{S}_1, \mathbf{S}_+ \big]^{\top} 
    - \frac{\alpha_1}{m_0} \mathbf{S}_1 \mathbf{S}_1^{\top} \\
    =& \frac{1}{m_0} \mathbf{S}_0 \mathbf{S}_0^{\top} + (\frac{\alpha_1}{m_1}-\frac{\alpha_1}{m_0}) \mathbf{S}_1 \mathbf{S}_1^{\top} + \frac{\alpha_1}{m_1} \mathbf{S}_+ \mathbf{S}_+^{\top}.
\end{align*}
Note that, due to its size, $\mathbf{C}_{\mymf} \in \mathbb{R}^{n \times n}$ should not be computed explicitly; rather, its low-rank structure should be exploited to compute the matrix action
\begin{align}\label{eq:matrixaction}
    \mathbf{v} \mapsto \mathbf{C}_{\mymf} \mathbf{v} :=
    \frac{1}{m_0} \mathbf{S}_0 (\mathbf{S}_0^{\top}\mathbf{v}) + \bigg(\frac{\alpha_1}{m_1}-\frac{\alpha_1}{m_0}\bigg) \mathbf{S}_1 (\mathbf{S}_1^{\top}\mathbf{v}) + \frac{\alpha_1}{m_1} \mathbf{S}_+ (\mathbf{S}_+^{\top}\mathbf{v}) \in \mathbb{R}^{n}
\end{align}
in $\mathcal{O}(n(m_0+m_1))$ flops.
The eigenvalue problem $\mathbf{C}_{\mymf} \mathbf{v}_j = \lambda_j \mathbf{v}_j$ can then be solved efficiently using iterative solvers (for example, the Lanczos algorithm).
Because of the low-rank structure, $\mathbf{C}_{\mymf}$ can have at most $m_0 + m_1$ non-zero eigenvalues, and their corresponding eigenvectors are in the column-span of the stacked snapshot matrix $\mathbf{S} := [\mathbf{S}_0, \mathbf{S}_1, \mathbf{S}_+] \in \mathbb{R}^{n \times m_0 + m_1}$.
For iterative eigenvalue solvers that employ a form of power iteration, such as Lanczos algorithm, initialization at $\mathbf{S}$ guarantees that all non-zero eigenvalues are identified.

With the multifidelity modes $\{\mathbf{v}_j\}_{j=1}^{m_0+m_1}$ determined, the for-loop in line \ref{l:correction} identifies all instances where the multifidelity approximation $\lambda_j \le 0$ might fail to approximate $\mathbb{E}_{\theta \sim \mu}[(\mathbf{v}_j^{\top}\mathbf{u}_0(\theta))^2] \ge 0$.
In these cases, we default to the Monte Carlo approximation
\begin{align*}
    \mathbb{E}_{\theta \sim \mu}[(\mathbf{v}_j^{\top}\mathbf{u}_0(\theta))^2] \approx \frac{1}{m_0} (\mathbf{S}_0^{\top}\mathbf{v}_j)^{\top} (\mathbf{S}_0^{\top}\mathbf{v}_j) \ge 0.
\end{align*}
Going into the third and final step of Algorithm \ref{alg:mfpod:discrete}, each $\lambda_j^+$ then contains non-negative approximations to $\mathbb{E}_{\theta \sim \mu}[(\mathbf{v}_j^{\top}\mathbf{u}_0(\theta))^2]$ as defined \eqref{eq:MFPOD:lambdajplus}.
After re-ordering by magnitude, we choose the reduced dimension $r$ in line \ref{l:r} based on the typical POD cumulative energy criterion
\begin{align}\label{eq:energy}
    \kappa \le \frac{\sum_{j=1}^r \lambda_j^+}{\sum_{j=1}^{m_0 + m_1} \lambda_j^+} \approx \frac{\mathbb{E}_{\theta \sim \mu}[\|\mathbf{V}\mathbf{V}^{\top} \mathbf{u}_0(\theta)\|^2]}{\mathbb{E}_{\theta \sim \mu}[\| \mathbf{u}_0(\theta)\|^2]}
\end{align}
and set $\mathbf{V} := [\mathbf{v}_1, \dots, \mathbf{v}_r] \in \mathbb{R}^{n \times r}$ to contain the first $r$ modes as columns.
Of course, other choices of $r$ are admissible.

\subsection{Computational cost}
Algorithm \ref{alg:mfpod:discrete} has three major sources of computational costs: The sampling costs in lines \ref{l:draw-samples} to \ref{l:snapshots:low}, the solve of the eigenvalue problem $\mathbf{C}_{\mymf} \mathbf{v}_j = \lambda_j \mathbf{v}_j$ for $m_0 + m_1$ different eigenvalue-eigenvector pairs in line \ref{l:eigenvalues}, and the Monte Carlo corrections needed for $\lambda_j^+$ to be non-negative in line~\ref{l:correction}.
All remaining computations are independent from the high-fidelity dimension $n$.

The sampling cost is determined by the sample sizes $m_0$ and $m_1$, and accumulates to $m_0 c_0 + m_1 c_1$ where $c_0 \ge c_1 > 0$ are the cost of evaluating the high- and low-fidelity model respectively.
For $1 \le m_0 < m_1$ to hold, we require a snapshot sampling budget of at least $\mybudget \ge c_0 + 2c_1$. For large-scale scientific applications, computing the snapshots is the dominant cost.

When exploiting the low-rank structure of $\mathbf{C}_{\mymf}$ via the matrix action \eqref{eq:matrixaction},
using the Lanczos algorithm to solve the eigenvalue problem in line \ref{l:eigenvalues} for the $m_0 + m_1$ largest (by absolute value) eigenvalues and eigenvectors scales as $\mathcal{O}(k_{\rm{itr}}n(m_0+m_1)^2)$, where $k_{\rm{itr}}$ is the number of iterations and depends on the spectrum of $\mathbf{C}_{\mymf}$.
The cost $\mathcal{O}(k_{\le 0}m_0n)$ of correcting $k_{\le 0}$ non-positive multifidelity eigenvalues $\lambda_j \le 0$ to the Monte Carlo approximation $\frac{1}{m_0} (\mathbf{S}_0^{\top}\mathbf{v}_j)^{\top} (\mathbf{S}_0^{\top}\mathbf{v}_j)$ is negligible in comparison.

\subsection{Generalization to multiple low-fidelity models}\label{sec:discrete:generalization}
The generalization of Algorithm \ref{alg:mfpod:discrete} to the case of multiple low-fidelity models is straightforward:
For each low-fidelity model $\ell$, with sample size $m_{\ell} > m_{\ell-1}$, we compute the snapshot matrices
\begin{align}
    \mathbf{S}_{\ell} &:= \left[\mathbf{u}_{\ell}(\theta_1), \dots, \mathbf{u}_{\ell}(\theta_{m_{\ell-1}})\right] \in \mathbb{R}^{n \times m_{\ell-1}}, \\
    \mathbf{S}_{\ell, +} &:= \left[\mathbf{u}_{\ell}(\theta_{m_{\ell-1}+1}), \dots, \mathbf{u}_{\ell}(\theta_{m_{\ell}})\right] \in \mathbb{R}^{n \times (m_{\ell} - m_{\ell-1})}.
\end{align}
For model weights $\alpha_1, \dots, \alpha_L \in \mathbb{R}$, the multifidelity covariance matrix $\mathbf{C}_{\mymf}$ is then defined as
\begin{align}\label{eq:discrete:Cmf:multiple}
    \mathbf{C}_{\mymf} 
    =& \frac{1}{m_0} \mathbf{S}_0 \mathbf{S}_0^{\top} 
    + \sum_{\ell=1}^L \left( (\frac{\alpha_{\ell}}{m_{\ell}}-\frac{\alpha_{\ell}}{m_{\ell-1}}) \mathbf{S}_{\ell} \mathbf{S}_{\ell}^{\top} + \frac{\alpha_{\ell}}{m_{\ell}} \mathbf{S}_{\ell, +} \mathbf{S}_{\ell, +}^{\top} \right).
\end{align}
All remaining steps of Algorithm \ref{alg:mfpod:discrete} remain the same.

\subsection{Adaptive weights}

\begin{algorithm}
\caption{Adaptive MFPOD weights}
\label{alg:adaptive}
\begin{algorithmic}
\STATE \textbf{Input:} Snapshot matrices $\mathbf{S}_0, \mathbf{S}_1 \in \mathbb{R}^{n \times m_0}$, $\mathbf{S}_+ \in \mathbb{R}^{n \times (m_1-m_0)}$ \\
\hrulefill

\STATE Initialize $\mathbf{W} \gets \mathbf{0} \in \mathbb{R}^{n \times 0}$, $j \gets 1$, $r_{\max} \gets 0$
\vspace{0.5em}
\WHILE{$\|\mathbf{S}_0-\mathbf{W}\mathbf{W}^{\top}\mathbf{S}_0\| > 0$}
\item[]\texttt{\textbackslash \textbackslash ~ update weight}
    \STATE \textbf{for} $i=1,\ldots,m_0 $ \textbf{:} $x_i \gets \|\mathbf{u}_0(\theta_i)-\mathbf{W}\mathbf{W}^{\top}\mathbf{u}_0(\theta_i)\|^2$, $y_i \gets \|\mathbf{u}_1(\theta_i)-\mathbf{W}\mathbf{W}^{\top}\mathbf{u}_1(\theta_i)\|^2$
    \vspace{-1em}
    \STATE Compute sample variance $\sigma_{1}^2$ of $\{y_i\}_{i=1}^{m_0}$ 
    \STATE Compute sample covariance $\sigma_{0, 1}$ between $\{x_i\}_{i=1}^{m_0}$ and $\{y_i\}_{i=1}^{m_0}$
    \STATE \textbf{if} $\sigma_1^2 > 0$, set $\alpha_1 \gets \sigma_{0,1} / \sigma_1^2$ \textbf{else} $\alpha_1 \gets 0$
    \vspace{0.5em}
    \item[]\texttt{\textbackslash \textbackslash ~ most important mode}
    \STATE Set up matrix action \\
$\mathbf{v} \mapsto \mathbf{C}_{\mymf} \mathbf{v} := \frac{1}{m_0} \mathbf{S}_0 (\mathbf{S}_0^{\top}\mathbf{v}) + (\frac{\alpha_1}{m_1}-\frac{\alpha_1}{m_0}) \mathbf{S}_1 (\mathbf{S}_1^{\top}\mathbf{v}) + \frac{\alpha_1}{m_1} \mathbf{S}_+ (\mathbf{S}_+^{\top}\mathbf{v})$
    \STATE Compute eigenvalue-eigenvector pair $(\lambda_j, \mathbf{v}_j) \in \mathbb{R} \times \mathbb{R}^{n}$, $\mathbf{C}_{\mymf}\mathbf{v}_j = \lambda_j$, such that $|\lambda_j|$ is maximal with $\|\mathbf{v}_j\|=1$, $\mathbf{v}_j \perp \mathbf{W}$
\item \textbf{if} $\lambda_j > 0$ set $\lambda_j^+ \gets \lambda_j$ \textbf{else} set $\lambda_j^+ \gets \frac{1}{m_0} (\mathbf{S}_0 \mathbf{v}_j)^{\top}\mathbf{S}_0 \mathbf{v}_j \ge 0$
\vspace{0.5em}
\STATE Expand $\mathbf{W} \gets [\mathbf{W}, \mathbf{v}_j]$, $j \gets j+1$, $r_{\max} \gets r_{\max}+1$
\ENDWHILE
\RETURN $\{(\lambda_j^+, \mathbf{v}_j)\}_{j=1}^{r_{\max}}$
\end{algorithmic}
\end{algorithm}

Finally, we address the control variate weight $\alpha_1$.
Considering that the final reduced dimension $r$ is selected based on the energy criterion \eqref{eq:energy}, choosing $\alpha_1 = \alpha_1^*(\mathbf{0}) = \frac{\text{Cov}_{\theta \sim \mu} \left(\|\mathbf{u}_{0}(\theta)\|^2, \|\mathbf{u}_{1}(\theta)\|^2 \right)}{\mathbb{V}_{\theta \sim \mu} \left(  \|\mathbf{u}_{1}(\theta)\|^2 \right)}$ is optimal to minimize the MSE of $\mathcal{J}_{\mymf}(\mathbf{0}) \approx \mathbb{E}_{\theta \sim \mu}[\|\mathbf{u}_0(\theta)\|^2]$
as an estimator of the total energy.
The optimal weight $\alpha_1^*(\mathbf{0})$ can be approximated using the sample variance and covariance of $\{\|\mathbf{u}_0(\theta_i)\|^2\}_{i=1}^{m_0}$, $\{\|\mathbf{u}_1(\theta_i)\|^2\}_{i=1}^{m_0}$ on the shared training parameters.
When the high- and low-fidelity model share similar dominating features, $\alpha^*(\mathbf{0}) \approx 1$.

Because the control variate weight $\alpha_1$ balances the contributions of the high- and low-fidelity model in the multifidelity cost function $\mathcal{J}_{\mymf}$, a large $|\alpha_1|$ risks prioritizing low-fidelity features in the higher-order MFPOD modes.
Algorithm \ref{alg:adaptive} addresses this issue by adjusting the weight $\alpha_1$ throughout an iterative selection of the pairs $(\lambda_j^+, \mathbf{v}_j)$.
It can be integrated into Algorithm \ref{alg:mfpod:discrete} as an extension by replacing lines \ref{l:Cmf} to \ref{l:correction} with the call 
\begin{align*}
    \{(\lambda_k^+, \mathbf{v}_j)\}_{j=1}^{r_{\max}} \gets \texttt{AdaptiveWeights}(\mathbf{S}_0, \mathbf{S}_1, \mathbf{S}_+).
\end{align*}
For any integer $j \ge 0$, $(\lambda_j^+, \mathbf{v}_j)$ is chosen as the dominant eigenvalue-eigenvector pair for the cost function
\begin{align*}
    \mathcal{J}_{\mymf}([\mathbf{W}, \mathbf{v}])
    \approx \mathbb{E}_{\theta \sim \mu}[\|\mathbf{u}_0(\theta) - \mathbf{W}\mathbf{W}^{\top}\mathbf{u}_0(\theta) - \mathbf{v}\mathbf{v}^{\top}\mathbf{u}_0(\theta)\|^2], 
\end{align*}
where $\mathbf{W}=[\mathbf{v}_1, \dots, \mathbf{v}_{j-1}]$ contains all modes selected previously and $\mathcal{J}_{\mymf}$ is computed with $\alpha_1 \approx \alpha_1^*(\mathbf{W})$.

\section{Analysis}
\label{sec:analysis}

In this section we show that the MFPOD operator $\mathcal{C}_{\mymf}$ converges in probability to the POD second moment operator $\mathcal{C}$
in the Hilbert-Schmidt norm.
The rate of convergence is inversely proportional to the  snapshot sampling cost $\mybudget = \sum_{\ell=0}^{L}m_{\ell}c_{\ell}$.
We derive an upper bound for the MSE of the summed MFPOD eigenvalues $\sum_{j=1}^r \lambda_j$ approximating $\sum_{j=1}^r \lambda_j^*$, and, under the condition that $\lambda_r^* > \lambda_{r+1}^*$, for the summed projection error $\sum_{j=1}^r\|v_j-\Pi_{V_r^*}v_j\|_H^2$ of the MFPOD eigenfunctions $v_1, \dots, v_r$ onto $V^* := \text{span}(v_1^*, \dots, v_r^*)$.
Both MSE upper bounds are of order $\mathcal{O}(r / \mybudget)$, showing that, in the limit of arbitrarily large budget, the MFPOD method recovers both the truth POD subspace $V^*$ and its captured energy in probability.

We start with the convergence of $\mathcal{C}_{\mymf}$ to $\mathcal{C}$ in probability in the Hilbert-Schmidt norm $\|\mathcal{C}_{\mymf}-\mathcal{C}\|_{\rm{HS}}^2 := \sum_{j=1}^{\dim H} \|(\mathcal{C}_{\mymf}-\mathcal{C}) v_j^*\|_H^2$.

\begin{theorem}\label{thm:convergence:C}
    Let $(H, (\, . \,, \, . \, )_H)$ be a separable Hilbert space.
    Let $\theta : \Omega \rightarrow \mathbb{R}$ be a random variable on a probability space $(\Omega, \mathcal{F}, \mathbb{P})$ with distribution $\mu$.
    Let $L \ge 1$ be the number of available surrogate models.
    Assume $\mathbb{E}[\|u_{\ell}(\theta)\|^4_H] < \infty$ for $\ell = 0, \dots, L$.
    Let $\alpha_1, \dots, \alpha_L \in \mathbb{R}$ be fixed weights and $1 =: q_0 < q_1 < \dots < q_L$ be fixed integers.
    For any $m_0 \in \mathbb{N}$, define $\mathcal{C}_{\mymf}(m_0) : H \rightarrow H$ as in \eqref{eq:Cmf} for the sample sizes $m_{\ell} := q_{\ell} m_0 \in \mathbb{N}$, $\ell = 1, \dots, L$.
    Define $\mathcal{C}$ as in \eqref{eq:C}.
    Then there exists $\gamma < \infty$ independent of $m_0$ with
\begin{align}\label{eq:convergence:C}
        \mathbb{E}[\|\mathcal{C}_{\mymf}(m_0)-\mathcal{C}\|_{\rm{HS}}^2] = \frac{\gamma}{m_0}
    \end{align}
    for all $m_0 \in \mathbb{N}$.
    In particular, $\|\mathcal{C}_{\mymf}(m_0)-\mathcal{C}\|_{\rm{HS}}^2 \xrightarrow[]{P} 0$ for $m_0 \rightarrow \infty$.
\end{theorem}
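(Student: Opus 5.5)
The plan is to view $\mathcal{C}_{\mymf}(m_0)$ as an unbiased Monte Carlo-type estimator of $\mathcal{C}$ in the Hilbert space $\mathrm{HS}(H)$ of Hilbert--Schmidt operators. We then compute its mean squared error exactly, exactly as in the scalar MFMC identity \eqref{eq:MFPOD:MSE}, but with variances taken in $\mathrm{HS}(H)$.

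For each model $\ell$ define the rank-one random operators $X_\ell(\theta) := u_\ell(\theta)\otimes u_\ell(\theta)$, $v\mapsto (u_\ell(\theta),v)_H u_\ell(\theta)$. Their Hilbert--Schmidt norm is $\|X_\ell(\theta)\|_{\rm HS} = \|u_\ell(\theta)\|_H^2$. The assumption $\mathbb{E}[\|u_\ell(\theta)\|_H^4]<\infty$ therefore says exactly that $X_\ell \in L^2(\Omega;\mathrm{HS}(H))$. Consequently the Bochner means $\mathcal{C}_\ell := \mathbb{E}[X_\ell]$ exist in $\mathrm{HS}(H)$, with $\mathcal{C}_0=\mathcal{C}$ from \eqref{eq:C}.

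Next we rewrite \eqref{eq:Cmf} as a sum of independent blocks. Set $\bar X_\ell^{(a,b)} := \frac{1}{b-a}\sum_{i=a+1}^{b} X_\ell(\theta_i)$. A Monte Carlo average over the first $m_{\ell}$ samples splits as
\[
\frac{m_{\ell-1}}{m_\ell}\bar X_\ell^{(0,m_{\ell-1})} + \frac{m_\ell-m_{\ell-1}}{m_\ell}\bar X_\ell^{(m_{\ell-1},m_\ell)}.
\]
Hence
\[
\mathcal{C}_{\mymf}-\mathcal{C} = \frac{1}{m_0}\sum_{i=1}^{m_0}(X_0(\theta_i)-\mathcal{C}) + \sum_{\ell=1}^L \alpha_\ell\Big(\frac{1}{m_\ell}\sum_{i=1}^{m_\ell}(X_\ell(\theta_i)-\mathcal{C}_\ell) - \frac{1}{m_{\ell-1}}\sum_{i=1}^{m_{\ell-1}}(X_\ell(\theta_i)-\mathcal{C}_\ell)\Big),
\]
where the $\mathcal{C}_\ell$ terms cancel inside each bracket. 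Regrouping by sample index gives $\mathcal{C}_{\mymf}-\mathcal{C} = \sum_{i=1}^{m_L} Z_i$. Here $Z_i$ is a fixed linear combination of $X_\ell(\theta_i)-\mathcal{C}_\ell$, with coefficients $\beta_{\ell,i}\in\{1/m_\ell-1/m_{\ell-1},\,1/m_\ell,\,0\}$ that depend only on which range $(m_{k-1},m_k]$ contains $i$.

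The $Z_i$ are independent and mean zero in $\mathrm{HS}(H)$, so the cross terms vanish. This uses $\mathbb{E}[(Z_i,Z_j)_{\rm HS}]=(\mathbb{E}Z_i,\mathbb{E}Z_j)_{\rm HS}=0$, which is justified since everything is in $L^2$ and the HS inner product is continuous. We therefore obtain $\mathbb{E}\|\mathcal{C}_{\mymf}-\mathcal{C}\|_{\rm HS}^2=\sum_i \mathbb{E}\|Z_i\|_{\rm HS}^2$.

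Collecting terms within each range $(m_{k-1},m_k]$ gives a finite sum over $k$. Each summand is $(m_k-m_{k-1})$ times $\mathbb{E}\|\sum_\ell \beta_\ell^{(k)} (X_\ell-\mathcal{C}_\ell)\|^2_{\rm HS}$. This expectation is a quadratic form in the $\beta^{(k)}$ with the finite operator covariances $\Sigma_{\ell\ell'}:=\mathbb{E}[(X_\ell-\mathcal{C}_\ell,X_{\ell'}-\mathcal{C}_{\ell'})_{\rm HS}]$, bounded by Cauchy--Schwarz and the fourth-moment assumption. Since $m_\ell=q_\ell m_0$, every $\beta^{(k)}_\ell$ equals $1/m_0$ times a constant depending only on $q,\alpha$. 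Each block thus contributes $(q_k-q_{k-1})m_0\cdot m_0^{-2}\cdot\text{const}$. This yields \eqref{eq:convergence:C} with $\gamma$ an explicit finite expression in $q_\ell,\alpha_\ell,\Sigma_{\ell\ell'}$, independent of $m_0$. It is the HS analogue of \eqref{eq:MFPOD:MSE}: $\gamma = \Sigma_{00}+\sum_\ell (1-1/q_\ell)\ldots$ type terms, though we need not simplify it.

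Convergence in probability then follows from Markov's inequality: $\mathbb{P}(\|\mathcal{C}_{\mymf}-\mathcal{C}\|^2_{\rm HS}>\varepsilon)\le \gamma/(\varepsilon m_0)\to0$.

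The main obstacle is the careful bookkeeping of the regrouping, i.e.\ getting the coefficients $\beta_{\ell,i}$ right so that independence can be used. A second concern is rigor in the infinite-dimensional setting: measurability of $\theta\mapsto X_\ell(\theta)$ as an $\mathrm{HS}(H)$-valued map, and the interchange of expectation with the HS inner product. Both are handled by separability of $H$ (hence of $\mathrm{HS}(H)$) and Bochner integrability.
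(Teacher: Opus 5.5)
Your proof is correct, but it takes a different route from the paper's.

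\textbf{The paper's route.} The paper works coordinatewise in the eigenbasis $\{v_j^*\}$ of $\mathcal{C}$. Each entry $(\mathcal{C}_{\mymf}v_j^*,v_k^*)_H$ is a scalar MFMC estimator built from $f_\ell^{(j,k)}(\theta)=(u_\ell(\theta),v_j^*)_H(u_\ell(\theta),v_k^*)_H$. The paper applies the scalar MFMC variance formula (Lemma~3.3 of \cite{peherstorfer2016optimal}) to each entry, sums over $(j,k)$, and checks finiteness of $\gamma$ separately via $\sum_{j,k}\mathbb{E}[(f_\ell^{(j,k)})^2]=\mathbb{E}[\|u_\ell(\theta)\|_H^4]$.

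\textbf{Your route.} You treat $\mathcal{C}_{\mymf}-\mathcal{C}$ as one $\mathrm{HS}(H)$-valued random variable. You split it into independent centered per-sample pieces $Z_i$, which in effect re-derives the MFMC variance identity directly in the Hilbert space $\mathrm{HS}(H)$.

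\textbf{What each buys.} The paper's version is shorter, since the variance computation is outsourced to a known lemma. It also delivers $\gamma$ in a simplified, manifestly nonnegative form, $\sum_{j,k}\big(\frac{1}{q_L}\mathbb{V}[f_0^{(j,k)}]+\sum_\ell(\frac{1}{q_{\ell-1}}-\frac{1}{q_\ell})\mathbb{V}[f_0^{(j,k)}-\alpha_\ell f_\ell^{(j,k)}]\big)$. The price is an interchange of expectation with a doubly infinite sum. Your version is basis-free and self-contained. The fourth-moment hypothesis enters in one line through $\|u\otimes u\|_{\rm HS}=\|u\|_H^2$. That $\mathcal{C}$ is Hilbert--Schmidt and that $\|\mathcal{C}_{\mymf}-\mathcal{C}\|_{\rm HS}<\infty$ come for free from Bochner integrability. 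Your unsimplified quadratic form agrees with the paper's constant: because the sample sets are nested, the cross terms between distinct low-fidelity models cancel. What remains is $\gamma=\Sigma_{00}+\sum_{\ell}(\frac{1}{q_{\ell-1}}-\frac{1}{q_\ell})(\alpha_\ell^2\Sigma_{\ell\ell}-2\alpha_\ell\Sigma_{0\ell})$, with $\Sigma_{\ell\ell'}=\sum_{j,k}\mathrm{Cov}(f_\ell^{(j,k)},f_{\ell'}^{(j,k)})$.

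\textbf{Two small points of precision.} First, for $\ell\ge 1$ the per-sample coefficients carry the factor $\alpha_\ell$. They are $\alpha_\ell(1/m_\ell-1/m_{\ell-1})$ on $i\le m_{\ell-1}$ and $\alpha_\ell/m_\ell$ on $(m_{\ell-1},m_\ell]$, while $X_0$ gets $1/m_0$ on the first block. Second, the identity $\mathbb{E}[(Z_i,Z_j)_{\rm HS}]=(\mathbb{E}Z_i,\mathbb{E}Z_j)_{\rm HS}$ rests on independence, via the product measure and Fubini with $\|Z_i\|_{\rm HS}\|Z_j\|_{\rm HS}$ integrable. Continuity of the inner product alone does not give it.
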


\begin{proof}
    The proof is provided in Appendix~\ref{sec:appendix}.
\end{proof}

Theorem \ref{thm:convergence:C} establishes that the MFPOD operator $\mathcal{C}_{\mymf}$ converges to $\mathcal{C}$ (in the Hilbert-Schmidt norm) in probability for $m_0 \rightarrow \infty$.
Because the sampling ratios $q_{\ell} = m_{\ell} / m_{0}$, $\ell = 1, \dots, L$, are fixed, $m_0$ is proportional to the invested computational budget $\mybudget = \sum_{\ell=0}^{L}m_{\ell} c_{\ell} = m_0 \sum_{\ell=0}^{L}q_{\ell} c_{\ell}$, and thus $\mathbb{E}[\|\mathcal{C}_{\mymf}-\mathcal{C}\|_{\rm{HS}}^2] = \frac{\gamma}{\mybudget}\sum_{\ell=0}^{L}q_{\ell} c_{\ell} \in \mathcal{O}(\mybudget^{-1})$.
The constant $\gamma$ depends on the models $u_0(\theta), \dots, u_L(\theta)$ and the sampling ratios $q_1, \dots, q_L$, but is independent of $m_0$. 
To show that $\gamma < \infty$, the proof of Theorem \ref{thm:convergence:C} uses the assumption that $\mathbb{E}[\|u_{\ell}(\theta)\|_H^4] < \infty$ for $\ell = 0, \dots, L$, which is a common technical assumption that holds, for example, when $\|u_{\ell}(\theta)\|_H$ is uniformly bounded for almost all $\theta \in \text{supp}(\mu)$.

Building upon Theorem \ref{thm:convergence:C}, the following proposition establishes upper bounds on the MSEs of the summed MFPOD eigenvalues and on the projection error of the MFPOD modes $v_1, \dots, v_r$ onto the truth POD subspace $V^*$.
In particular, using that the budget $\mybudget = m_0 \sum_{\ell=0}^{L}q_{\ell} c_{\ell}$ is proportional to $m_0$, Proposition \ref{thm:convergence:lambda} yields the convergence rates \eqref{eq:bound:sec2:eigvals} and \eqref{eq:bound:sec2:eigvecs} discussed in Section \ref{sec:MFPOD-opt}.

\begin{proposition}\label{thm:convergence:lambda}
    Under the conditions of Theorem \ref{thm:convergence:C}, let $(\lambda_j^*, v_j^*) \in \mathbb{R} \times H$, $j = 1, \dots, \dim H$, be the eigenvalue-eigenfunction pairs of $\mathcal{C}$ such that $\lambda_1^* \ge \lambda_2^* \ge \dots \ge 0$, and $\{v_j^*\}_{j=1}^{\dim H}$ is an orthonormal basis of $H$.
    Let $r < \dim H$ be a fixed integer.
    For $m_0 \in \mathbb{N}$ let $(\lambda_j, v_j) \in \mathbb{R} \times H$, $j=1, \dots, \dim H$, be eigenvalue-eigenfunction pairs of $\mathcal{C}_{\mymf}(m_0)$ such that $\lambda_1 \ge \dots \ge \lambda_r \ge \sup_{j > r}\lambda_j$ and $v_1, \dots, v_r$ are orthonormal.
    Then
\begin{align}\label{eq:bound:eigvals}
        \mathbb{E}\left[ \bigg(\sum_{j=1}^r \lambda_j -\sum_{j=1}^r \lambda_j^*\bigg)^2 \right] \le \frac{r \gamma}{m_0}
    \end{align}
    where $\gamma$ is the same constant as in Theorem \ref{thm:convergence:C}.
    If $\lambda_r^* > \lambda_{r+1}^*$, then also
    \begin{align}\label{eq:bound:eigvecs}
        \mathbb{E}\hspace{-0.2em}\left[\hspace{-0.2em} \bigg(\hspace{-0.2em}\sum_{j=1}^r \|v_j^* - \Pi_{V}v_j^*\|_H^2\hspace{-0.2em}\bigg)^2
        \right] \hspace{-0.2em}
        =\mathbb{E}\hspace{-0.2em}\left[\hspace{-0.2em} \bigg(\hspace{-0.2em}\sum_{j=1}^r \|v_j - \Pi_{V_*}v_j\|_H^2\hspace{-0.2em}\bigg)^2\hspace{-0.2em}
        \right] \le \frac{2r\gamma}{m_0(\lambda_r^* - \lambda_{r+1}^*)^2}
    \end{align}
    with $V := \text{span}(v_1, \dots, v_r)$ and $V_* := \text{span}(v_1^*, \dots, v_r^*)$.
\end{proposition}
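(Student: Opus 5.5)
The plan is to work with the error operator $\mathcal{E} := \mathcal{C}_{\mymf}(m_0) - \mathcal{C}$, which is self-adjoint and Hilbert–Schmidt. The strategy is to bound both quantities pathwise by $\|\mathcal{E}\|_{\rm HS}^2$ and then take expectations using \eqref{eq:convergence:C} from Theorem~\ref{thm:convergence:C}. The main tool is the Ky Fan variational principle: for a compact self-adjoint operator $\mathcal{A}$,
\[
\max_{\dim W = r} \operatorname{tr}(\Pi_W \mathcal{A} \Pi_W)
\]
equals the sum of its $r$ largest eigenvalues. For $\mathcal{C}$ the maximizer is $V_*$, and for $\mathcal{C}_{\mymf}$ it is $V$.

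One point must be checked before using Ky Fan for $\mathcal{C}_{\mymf}$, which may have negative eigenvalues. If $\dim H = \infty$, then $0$ lies in the spectrum of $\mathcal{C}_{\mymf}$, since the operator has finite rank. The ordering hypothesis $\lambda_r \ge \sup_{j>r}\lambda_j$ then forces $\lambda_1,\dots,\lambda_r$ to be the $r$ largest spectral values. Hence Ky Fan holds and $\operatorname{tr}(\Pi_V\mathcal{C}_{\mymf}) = \sum_{j\le r}\lambda_j$.

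\textbf{Eigenvalue bound.} Write $P := \Pi_V$ and $P_* := \Pi_{V_*}$. Ky Fan applied to both operators gives
\[
\operatorname{tr}(\mathcal{E}P_*) \le \sum_{j\le r}\lambda_j - \sum_{j\le r}\lambda_j^* \le \operatorname{tr}(\mathcal{E}P).
\]
The upper bound uses that $V$ is admissible for $\mathcal{C}$; the lower bound uses that $V_*$ is admissible for $\mathcal{C}_{\mymf}$. For any rank-$r$ orthogonal projection $Q$, Cauchy–Schwarz gives
\[
|\operatorname{tr}(\mathcal{E}Q)| = |\operatorname{tr}(Q\mathcal{E}Q)| \le \|Q\|_{\rm HS}\|\mathcal{E}\|_{\rm HS} = \sqrt r\,\|\mathcal{E}\|_{\rm HS}.
\]
Squaring and taking expectations yields $r\gamma/m_0$ by Theorem~\ref{thm:convergence:C}, which is \eqref{eq:bound:eigvals}.

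\textbf{Subspace bound.} Set $s := \sum_{j\le r}\|v_j^* - P v_j^*\|_H^2 = r - \operatorname{tr}(PP_*)$. This expression is symmetric in $P$ and $P_*$, which gives the equality in \eqref{eq:bound:eigvecs}. It also gives $\|P - P_*\|_{\rm HS}^2 = 2r - 2\operatorname{tr}(PP_*) = 2s \le 2r$.

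Next I will establish a gap inequality. Let $a_j := \|Pv_j^*\|^2$. Then
\[
\operatorname{tr}(\mathcal{C}(P_*-P)) = \sum_{j\le r}\lambda_j^*(1-a_j) - \sum_{j>r}\lambda_j^* a_j,
\]
and $\sum_{j\le r}(1-a_j) = \sum_{j>r}a_j = s$ because $\sum_j a_j = r$. Hence
\[
\operatorname{tr}(\mathcal{C}(P_*-P)) \ge (\lambda_r^* - \lambda_{r+1}^*)\,s.
\]
By Ky Fan for $\mathcal{C}_{\mymf}$, $\operatorname{tr}(\mathcal{C}_{\mymf}(P - P_*)) \ge 0$. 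Adding the two inequalities gives
\[
(\lambda_r^*-\lambda_{r+1}^*)\,s \le \operatorname{tr}(\mathcal{E}(P - P_*)) \le \|\mathcal{E}\|_{\rm HS}\|P-P_*\|_{\rm HS} \le \sqrt{2r}\,\|\mathcal{E}\|_{\rm HS}.
\]
Squaring, dividing by the gap squared and taking expectations gives \eqref{eq:bound:eigvecs}.

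The main obstacle is getting the constant $2r$, not merely an $\mathcal{O}(r)$ bound. Bounding $\operatorname{tr}(\mathcal{E}P)$ and $\operatorname{tr}(\mathcal{E}P_*)$ separately would only give $4r$. The fix is to keep $P - P_*$ together as a single operator and use the identity $\|P - P_*\|_{\rm HS}^2 = 2s \le 2r$. A secondary technical point is justifying Ky Fan and the trace manipulations for the possibly indefinite, finite-rank $\mathcal{C}_{\mymf}$ together with a trace-class $\mathcal{C}$ in infinite dimensions. This is handled by the ordering hypothesis as above, and by noting that all the traces involve finite-rank projections, so they are well defined.
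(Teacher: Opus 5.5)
Your argument is correct, and it takes a different route from the paper's. The difference affects the constant.

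\textbf{Eigenvalue bound.} The paper works with the eigenvalues $\delta_k$ of $\mathcal{C}-\mathcal{C}_{\mymf}$, ordered by magnitude. It bounds both $|\sum_{j\le r}(\lambda_j-\lambda_j^*)|$ and $\sum_{j\le r}|((\mathcal{C}-\mathcal{C}_{\mymf})v_j,v_j)_H|$ by $\sum_{k\le r}|\delta_k|$, a Ky Fan-type singular-value inequality it uses without proof. Only then does it apply Cauchy--Schwarz in $\mathbb{R}^r$. You instead combine a two-sided Ky Fan sandwich with the Hilbert--Schmidt pairing $|\operatorname{tr}(\mathcal{E}Q)|\le\sqrt r\,\|\mathcal{E}\|_{\rm HS}$. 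This is more elementary and gives the same $r\gamma/m_0$. The paper's intermediate quantity $\sum_{k\le r}|\delta_k|$ is pathwise sharper, but that advantage disappears once Theorem~\ref{thm:convergence:C} is invoked, since it only controls $\mathbb{E}\|\mathcal{E}\|_{\rm HS}^2$.

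\textbf{Subspace bound.} Both proofs use the same gap inequality. The paper, however, bounds $|\sum_{j\le r}(\mathcal{C}v_j,v_j)_H-\sum_{j\le r}\lambda_j^*|$ by the triangle inequality as $2\sum_{k\le r}|\delta_k|$. Squaring that gives $4r\|\mathcal{E}\|_{\rm HS}^2$, so the written argument actually yields $4r\gamma/(m_0(\lambda_r^*-\lambda_{r+1}^*)^2)$, not the stated $2r\gamma$. This is exactly the loss you anticipated. Keeping $P-P_*$ together and using $\|P-P_*\|_{\rm HS}^2=2s\le 2r$ is what delivers the constant as stated.

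Your route also gives more. If you keep $\|P-P_*\|_{\rm HS}=\sqrt{2s}$ rather than bounding it by $\sqrt{2r}$, you get the $r$-free, Davis--Kahan-type pathwise bound $s\le 2\|\mathcal{E}\|_{\rm HS}^2/(\lambda_r^*-\lambda_{r+1}^*)^2$. In addition, your identity $s=r-\operatorname{tr}(PP_*)$ proves the equality of the two expectations in \eqref{eq:bound:eigvecs}, which the paper states but never argues.

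\textbf{One thing to tidy up.} Justify the Ky Fan maximum for the indefinite $\mathcal{C}_{\mymf}$ directly. It follows from the same rearrangement you use for the gap inequality, given that the $v_j$ form an orthonormal eigenbasis and $\lambda_r\ge\sup_{j>r}\lambda_j$. The detour through $0$ lying in the spectrum is therefore unnecessary.
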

\begin{proof}
    The proof is provided in Appendix~\ref{sec:appendix}.
\end{proof}

The first bound \eqref{eq:bound:eigvals} in Proposition \ref{thm:convergence:lambda} justifies the captured energy criterion \eqref{eq:energy} for selecting the reduced dimension $r$.
The bound shows that $\sum_{j=1}^r \lambda_j \xrightarrow[]{P} \sum_{j=1}^r \lambda_j^*$ as $m_0 \rightarrow \infty$.
Using that $\mathcal{J}_{\mymf}(\{0\}) = \sum_{j=1}^{\dim H} \lambda_j$ by Lemma \ref{thm:costfunction}, $\mathbb{E}[\mathcal{J}_{\mymf}(\{0\})] = \sum_{j=1}^{\dim H} \lambda_j^* > 0$ by \eqref{eq:MFPOD:unbiased}, and $\mathbb{V}[\mathcal{J}_{\mymf}(\{0\})] \in \mathcal{O}(m_0^{-1})$ by \eqref{eq:MFPOD:MSE}, the continuous mapping theorem yields that the MFPOD captured energy converges in probability 
\begin{align*}
    \frac{\sum_{j=1}^r \lambda_j}{\sum_{j=1}^{\dim H} \lambda_j}
    &\xrightarrow[]{P}\frac{\sum_{j=1}^r \lambda_j^*}{\sum_{j=1}^{\dim H} \lambda_j^*} = \frac{\mathbb{E}_{\theta \sim \mu}[\, \|\Pi_{V_*} u_0(\theta)\|_H^2]}{\mathbb{E}_{\theta \sim \mu}[\, \|u_0(\theta)\|_H^2]} 
    & \text{for } m_0 \rightarrow \infty
\end{align*}
to the maximum energy that can be retained of $\mathbb{E}_{\theta \sim \mu}[\, \|u_0(\theta)\|_H^2]$ by an $r$-dimensional linear subspace of $H$.
Consequently, for a large enough computational budget $\mybudget$ the captured energy criterion $\eqref{eq:energy}$ is unlikely to mislead the choice of reduced dimension.

The second bound \eqref{eq:bound:eigvecs} in Proposition \ref{thm:convergence:lambda} 
shows that as the computational budget $\mybudget$ increases the MFPOD subspace $V=\text{span}(v_1, \dots, v_r)$ aligns with the optimal subspace $V_* = \text{span}(v_1^*, \dots, v_r^*)$.
Geometrically, $\sum_{j=1}^r\|v_j^* - \Pi_{V}v_j^*\|_H^2 = \sum_{j=1}^r\sin^2(\beta_j)$ is the sum over the squared sines of the principal angles $\beta_1, \dots, \beta_r$ between $V$ and $V_*$.
By \eqref{eq:bound:eigvecs}, $0 \le  \sum_{j=1}^r\sin^2(\beta_j) \xrightarrow[]{P} 0$ for $m_0 \rightarrow \infty$, and thus each principal angle $\beta_1, \dots, \beta_r$ becomes arbitrarily small (in probability) for increasing budget.
The upper bound \eqref{eq:bound:eigvecs} scales inversely to the squared spectral gap $(\lambda_r^* - \lambda_{r+1}^*)^2$ which is an indicator for how easily $v_r^*$ and $v_{r+1}^*$ can be distinguished.

\begin{remark}\label{rmk:convergence:C}
    In the discrete setting of Section \ref{sec:discrete}, the Hilbert-Schmidt norm simplifies to the Frobenius norm, such that \eqref{eq:convergence:C} states $\mathbb{E}[\|\mathbf{C}_{\mymf}-\mathbf{C}_*\|_F^2] = \gamma / m_0$, where $\mathbf{C}_{\mymf}$ is defined as in \eqref{eq:discrete:Cmf:multiple} and $\mathbf{C}_* := \mathbb{E}_{\theta \sim \mu}[\mathbf{u}_0(\theta)\mathbf{u}_0(\theta)^{\top}] \in \mathbb{R}^{n \times n}$.
    Letting $(\lambda_1^*, \mathbf{v}_1^*), \dots, (\lambda_n^*, \mathbf{v}_n^*) \in \mathbb{R}_{\ge 0} \times \mathbb{R}^n$ be the eigenvalue-eigenvector pairs of $\mathbf{C}_*$ such that $\lambda_1^* \ge \dots \ge \lambda_n^* \ge 0$ and $\mathbf{v}_1, \dots, \mathbf{v}_n$ are orthonormal, \eqref{eq:bound:eigvecs} states that $\mathbb{E}[\|\mathbf{V}_* - \mathbf{V}\mathbf{V}^{\top}\mathbf{V}_*\|_F^4] \le \frac{2r\gamma}{m_0(\lambda_r^*-\lambda_{r+1}^*)^2}$ with $\mathbf{V}_* := [\mathbf{v}_1^*, \dots, \mathbf{v}_r^*] \in \mathbb{R}^{n \times r}$ and the columns of $\mathbf{V} = [\mathbf{v}_1, \dots, \mathbf{v}_r] \in \mathbb{R}^{n \times r}$ are the eigenvectors of $\mathbf{C}_{\mymf}$ for the largest $r$ eigenvalues.
\end{remark}

\section{Numerical results} \label{sec:results}
We present results for an illustrative example of the parameterized steady-state advection-diffusion equation and a large-scale ice sheet dynamics model.

\subsection{Advection-diffusion equation}
\label{sec:results:1d}

We consider the parameterized steady-state advection-diffusion equation
\begin{align}\label{eq:1d}
    -\frac{1}{\theta} \Delta u - u_x = 0 \qquad \text{on } D := (0,1)
\end{align}
with uniformly distributed parameter $\theta \in \Theta := [\theta_{\min}, \theta_{\max}] :=[1, 100]$, $\theta \sim \mu = \mathcal{U}([\theta_{\min}, \theta_{\max}])$ and Dirichlet boundary conditions $u(0) = 1$, $u(1) = 0$.
The exact solution for any given $\theta \in \Theta$ at $x \in D$ is $u(\theta)(x) = -e^{-\theta x}(e^{-\theta}-1)^{-1} + (1-e^{\theta})^{-1}$.
We discretize \eqref{eq:1d} with linear finite elements on uniform meshes over $D$, with $n=4097$ degrees of freedom for our high-fidelity model and $n_1 = 33$ degrees of freedom for our low-fidelity model.
We non-dimensionalize the model costs to $c_0 := 1$ for the high-fidelity model, and, because the compute time of our finite element solver scales linearly in the degrees of freedom, $c_1 := \frac{33}{4097} \approx 0.008$ for the low-fidelity model.
For single-fidelity POD, an integer budget $\mybudget \in \mathbb{N}$ then corresponds directly to the number of permitted samples from the high-fidelity model.
For MFPOD, we split the budget approximately evenly between the two models with $m_0 := \lfloor \mybudget / 2 \rfloor$ high-fidelity and $m_1 := 124 m_0 = \lfloor c_0 / c_1 \rfloor m_0$ low-fidelity samples; in particular, $m_0 = 2$ and $m_1 = 248$ for $\mybudget=5$, and $m_0 =5$ and $m_1 = 620$ for $\mybudget = 10$.

As a reference ground truth, we define the reference solution as the POD eigenvalues, $\{\lambda_i^{\rm{ref}}\}_{i=1}^{100,000}$, and eigenmodes, $V_{\rm{ref}}$, from $100,000$ high-fidelity snapshots $\{u_0(\theta_i^{\rm{ref}})\}_{i=1}^{100,000}$, where the parameters $\{\theta_i^{\rm{ref}}\}_{i=1}^{100,000}$ are spaced equidistantly on $\Theta$.

Figure~\ref{fig:1D:eigvals} plots the eigenvalues associated with the single-fidelity POD and MFPOD bases, computed respectively from \eqref{eq:POD:min:eigvals} and \eqref{eq:MFPOD:lambdajplus} in the $L^2(D)$ inner product. The figure shows the dominant spectra of the resulting eigenvalues for $\mybudget=5$ and $\mybudget=10$, with $100$ i.i.d.\ draws of samples in each case. 
Setting a tolerance on the estimated eigenvalues of $10^{-10}$, the figure shows that MFPOD identifies a larger number of modes with non-zero eigenvalues than POD, and that the MFPOD eigenvalues decay more slowly than the POD eigenvalues. This is an indication that, for a given budget, the multifidelity snapshot set is more information-rich than the single-fidelity snapshot set. For $\mybudget = 5$, POD samples five high-fidelity snapshots and 
identifies up to\footnote{In three of the 100 repeats, the fourth eigenvalue was below $10^{-10}$, and in 29 of the 100 repeats the fifth eigenvalue was below $10^{-10}$.} five eigenvalues above $10^{-10}$.
For the same budget, MFPOD samples two high-fidelity and $248$ low-fidelity snapshots, and identifies up to ten eigenvalues above $10^{-10}$, with 97 of the 100 draws identifying at least six modes.
We also note that for the first few dominant modes, the spread of the estimated eigenvalue is lower for MFPOD than for POD, reflecting the variance reduction expected from the MFPOD theory.
This variance reduction translates into the practical benefit that, for a low snapshot budget, MFPOD is less susceptible to the variation in modes induced by the randomness of the snapshot draws.

\begin{figure}
    \centering
    \includegraphics[width=\linewidth]{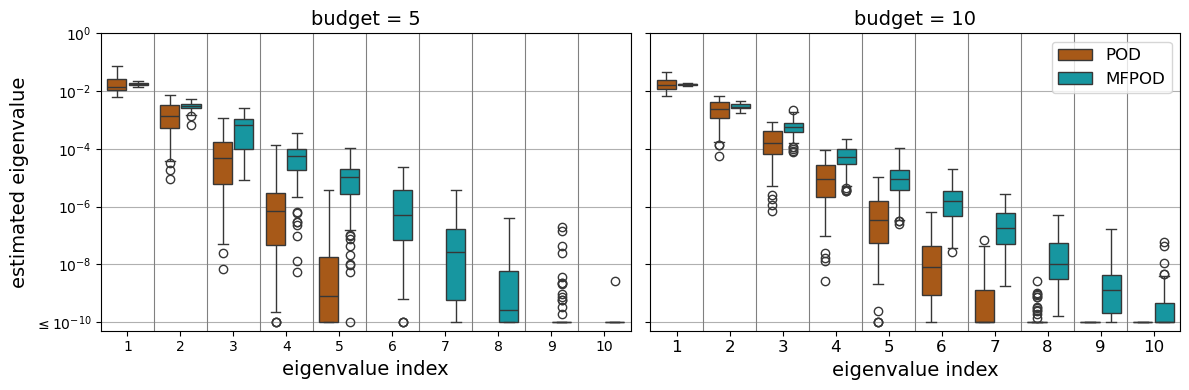}
    \caption{Distribution of estimated eigenvalues for the advection-diffusion example using single-fidelity and multifidelity POD for a computational budget of $\mybudget = 5$ (left) and $\mybudget = 10$ (right).
    The distributions are over 100 i.i.d.\ repeats of the same experimental setup.
    }
    \label{fig:1D:eigvals}
\end{figure}

We now assess the quality of the MFPOD basis compared with the single-fidelity POD basis and the reference solution. For any reduced dimension $r$, we obtain the $r$-dimensional single- and multifidelity POD spaces $V_0$ and $V_{\mymf}$ through \eqref{eq:POD:min} and \eqref{eq:MFPOD:min:plus}, respectively.
To assess the quality of a basis $V$, we define $E(V)$ to be the captured energy relative to the reference snapshot set:
\begin{align}
    E(V) := 100
    \frac{\sum_{i=1}^{100,000} \|\Pi_V u_0(\theta_i^{\rm{ref}})\|_{L^2(D)}^2}{\sum_{i=1}^{100,000} \|u_0(\theta_i^{\rm{ref}})\|_{L^2(D)}^2} \,[\%],
\end{align}
which provides a measure of how accurately the reference snapshot set can be represented in the basis $V$.
The reference basis $V_{\rm{ref}}$ achieves the maximal values of $E$ possible for a given reduced dimension. These maximal values are shown for reduced dimensions from $r=1$ to $r=8$ in 
Figure~\ref{fig:1D:energy} by the horizontal lines marked `reference.'
Figure~\ref{fig:1D:energy} plots the energy captured by single-fidelity POD, $E(V_0)$, and by MFPOD, $E(V_{\mymf})$, as a function of the reduced dimension $r$.
In almost all cases, the MFPOD subspaces capture more energy than the corresponding high-fidelity POD subspaces, indicating a higher quality basis.
We note that this improvement is achieved even without tuning MFPOD to have an optimal allocation between high- and low-fidelity snapshots. 
It is notable that in the lower budget case ($\mybudget = 5$), MFPOD is able to produce useful basis vectors (i.e., those that contribute in a meaningful way to the captured energy) while even going beyond the reduced dimension that is possible for high-fidelity POD. 
There is also a notable reduction in the variance of the captured energy for the MFPOD results, resulting in a more consistent performance over snapshot draws.

One might ask: why not just use the low-fidelity model alone?
Figure \ref{fig:1D:energy} also plots the captured energy $E(V_1)$ for POD spaces $V_1$ trained on $m_1 = \lfloor c_0 / c_1 \rfloor \mybudget = 124 \mybudget$ low-fidelity snapshots (and no high-fidelity snapshots).
Due to the large sample size, the low-fidelity POD spaces perform consistently and show almost no variation over the 100 repeats.
For the smaller reduced dimensions, the
captured energy $E(V_1)$ is similar to $E(V_{\mymf})$ with MFPOD and better than $E(V_0)$ with high-fidelity POD;
however, as the reduced dimension increases, the captured energy $E(V_1)$ with the low-fidelity POD spaces stagnates 
due to the bias in the low-fidelity model. (In this example, the bias is due to the discretization error associated with the coarse grid.) In contrast, the captured energy continues to increase for the MFPOD spaces, highlighting that $V_{\mymf}$ exploits the benefits of the low-fidelity model's low cost while avoiding the limitations of its bias through the multifidelity control-variate formulation.

\begin{figure}
    \centering
    \includegraphics[width=\linewidth]{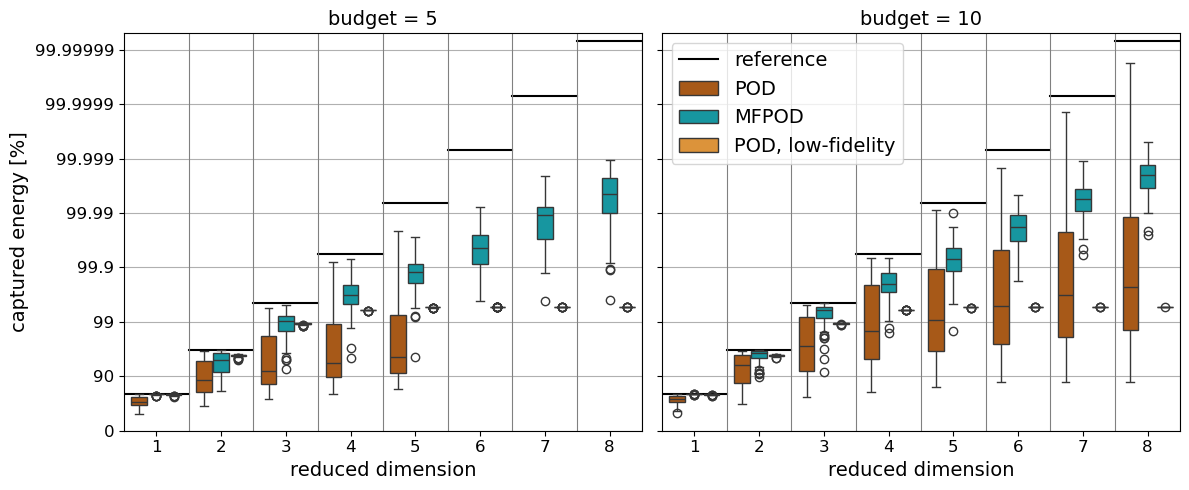}
    \caption{Captured energy relative to the reference snapshot set, for single-fidelity and multifidelity POD spaces of different reduced dimensions.
    Note that the variations in POD with low-fidelity snapshots is comparatively small due to the large low-fidelity sample size ($m_1 = 124 \mybudget$) such that the corresponding box plot has collapsed.
    }
    \label{fig:1D:energy}
\end{figure}

\begin{figure}
    \centering
    \includegraphics[width=\linewidth]{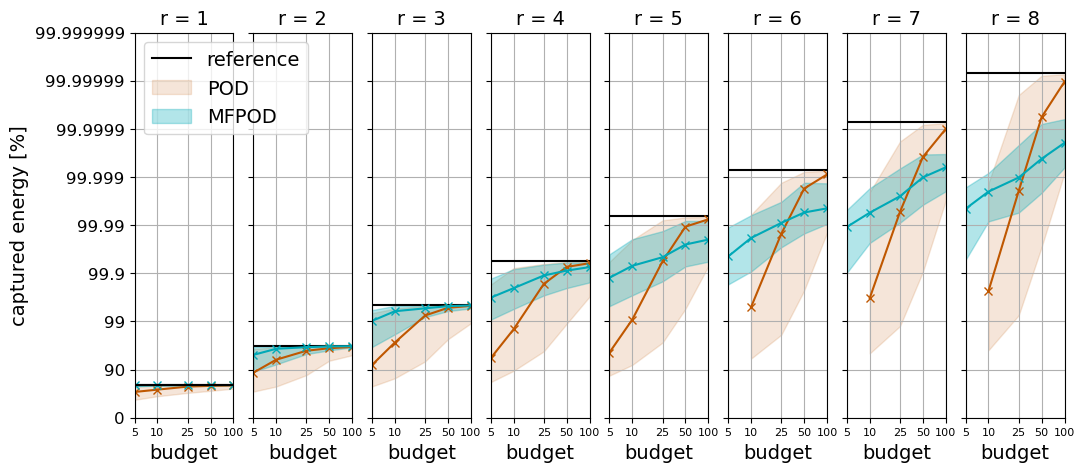}
    \caption{Captured energy by the single-fidelity and multifidelity POD spaces for increasing computational budgets.
    The shaded regions mark the $5\%$- to $95\%$-percentiles, the median is marked with $\times$.
    }
    \label{fig:1D:by-budget}
\end{figure}

To further compare the effect of the computational budget on POD and MFPOD, we additionally compute their modes for budgets $\mybudget \in \{20, 50, 100\}$.
Figure~\ref{fig:1D:by-budget} compares the captured energy by the POD and MFPOD subspaces by reduced dimension and budget.
MFPOD clearly outperforms POD for the smaller computational budgets $\mybudget \le 20$ regardless of reduced dimension.
However, as budget and reduced dimension increase, POD improves faster than MFPOD and in median captures more energy once $r \ge 5$ and $\mybudget \ge 50$.
Still, even for $r \ge 5$ and $\mybudget \ge 50$, MFPOD performs more consistently with a smaller spread between its $5$th- to $95$th-percentiles, such that its worst-case performance exceeds the worst-case performance of POD.
Further improvements in MFPOD could be achieved using a strategy to optimally allocate samples across fidelity levels, particularly for higher budgets.

\subsection{Pine Island glacier}
\label{sec:results:pig}

Our second example considers ice sheet dynamics using the Ice-sheet and Sea-level System Model (ISSM, \cite{larour2012continental}). Specifically, we model the ice velocities $v_x, v_y : D \rightarrow \mathbb{R}$ of the Pine Island glacier in Western Antarctica.
Our three-dimensional spatial domain $D$ is uniquely defined by its horizontal extent $D_{2D} \subset \mathbb{R}^2$, its surface altitude $s : D_{2D} \rightarrow \mathbb{R}_{\ge 0}$, and its ice thickness $h : D_{2D} \rightarrow \mathbb{R}_{\ge 0}$, through
\begin{align*}
    D = \{(x, y, z)^{\top} \in \mathbb{R}^3:\,
    (x, y)^{\top} \in D_{2D}, \, 
    s(x,y) > z > s(x, y)-h(x, y)
    \}.
\end{align*}
Figure~\ref{fig:PIG:surface} depicts $D_{2D}$, which employs the domain outline from \cite{seroussi2014sensitivity}. 
The fields $s$ and $h$ are also plotted in Figure~\ref{fig:PIG:surface}.

\begin{figure}
    \centering
    \includegraphics[height=8.5em, trim={5.2cm 2cm 4cm 0cm}, clip]{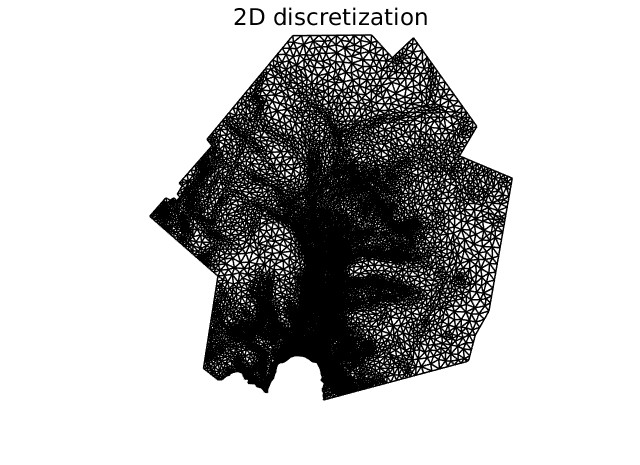}
    \includegraphics[height=8.5em, trim={5cm 2cm 4cm 0cm}, clip]{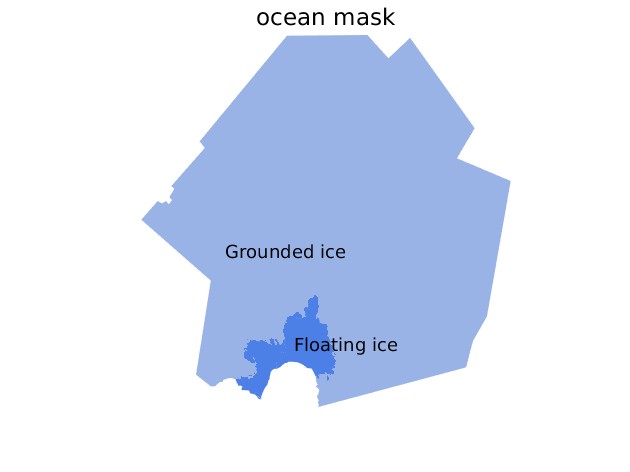}
    \includegraphics[height=8.5em, trim={4cm 2cm 2cm 0cm}, clip]{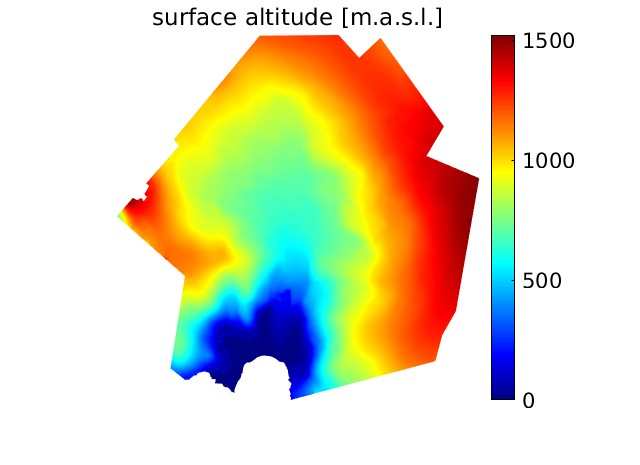}
    \includegraphics[height=8.5em, trim={4cm 2cm 2cm 0cm}, clip]{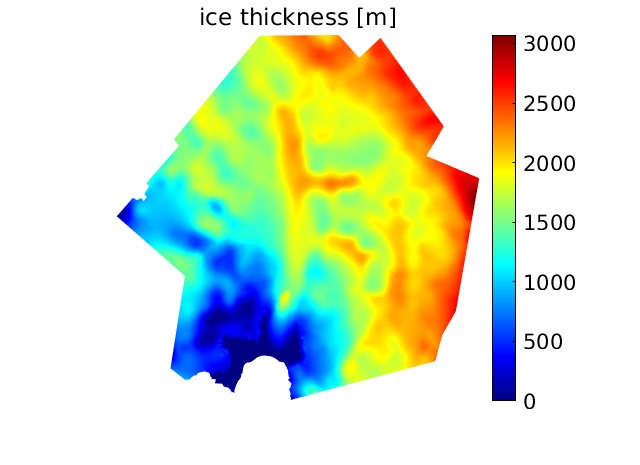}
    \caption{From left to right: Mesh of $D_{2D}$, mask indicating grounded and floating ice, surface altitude in meters above sea level [m.a.s.l.], and ice thickness in meters.}
    \label{fig:PIG:surface}
\end{figure}

Our high-fidelity model is the so-called higher-order (HO) model of the velocity fields $v_x, v_y : D \rightarrow \mathbb{R}$, which characterizes the strain rates
\begin{align*}
    \dot{\varepsilon}_{\rm{HO}, 1}
    &= \left(\begin{array}{c}
    2\frac{\partial v_x}{\partial x} + \frac{\partial v_y}{\partial y} \\
    \frac{1}{2} \frac{\partial v_x}{\partial y} +  \frac{1}{2} \frac{\partial v_y}{\partial x} \\
    \frac{1}{2} \frac{\partial v_x}{\partial z}
    \end{array}\right) &
    \dot{\varepsilon}_{\rm{HO}, 2}
    &= \left(\begin{array}{c}
    \frac{1}{2} \frac{\partial v_x}{\partial y} + \frac{1}{2} \frac{\partial v_y}{\partial x} \\
    \frac{\partial v_x}{\partial x} +  2 \frac{\partial v_y}{\partial y} \\
    \frac{1}{2} \frac{\partial v_y}{\partial z}
    \end{array}\right)
\end{align*}
on $D$ through the partial differential equations
\begin{align}\label{eq:PIG:HO}
    \nabla \cdot (2 \mu \dot{\varepsilon}_{\rm{HO}, 1}) &= \rho_{\rm{ice}} g \frac{\partial s}{\partial x}, &
    \nabla \cdot (2 \mu \dot{\varepsilon}_{\rm{HO}, 2}) &= \rho_{\rm{ice}} g \frac{\partial s}{\partial y},
\end{align}
where $\rho_{\rm{ice}} = 917 \ \textrm{kg}/\textrm{m}^3$ is the ice density, $g = 9.81 \ \textrm{m}/\textrm{s}^2$ is the gravitational acceleration, and $\mu$ is the effective ice viscosity computed from Glen's flow law.
Neglecting the effects of atmospheric pressure, which are small, we apply zero-Neumann boundary conditions on all ice-air interfaces; at the lateral ice-ocean interfaces, we apply water pressure.
At the basal boundary, we prescribe sliding boundary conditions
\begin{align}\label{eq:PIG:HO:boundary}
    2 \mu \dot{\varepsilon}_{\rm{HO}, 1} \cdot \mathbf{n} &= -\beta^2 N v_x &
    2 \mu \dot{\varepsilon}_{\rm{HO}, 2} \cdot \mathbf{n} &= -\beta^2 N v_y
\end{align}
where $\mathbf{n}$ is the outward pointing unit normal, $N$ is the effective pressure, and $\beta : D_{2D} \rightarrow \mathbb{R}_{\ge 0}$ is the basal friction field.
We refer to \cite{aretz2025multifidelity,larour2012continental,Paterson1994} for a complete description of the governing equations; the specific datasets and model parameter laws used in the modeling setup are summarized in Appendix \ref{sec:appendix:PIG}.

For our high-fidelity discretization, we create a triangular mesh of $D_{2D}$ and extrude it vertically  to $D$ with ten evenly-spaced horizontal layers.
The mesh of $D_{2D}$ is shown in Figure~\ref{fig:PIG:surface}, and has a resolution that varies between 250 m and 10 km.
We discretize the HO equations \eqref{eq:PIG:HO} using linear finite elements and denote with $\mathbf{v}_x(\beta), \mathbf{v}_y(\beta) \in \mathbb{R}^{106,350}$ the numerical solution vectors for any given $\beta : D_{2D} \rightarrow \mathbb{R}_{>0}$.
The basal friction field $\beta$ is a major uncertainty in ice sheet simulations because it is not directly observable but inferred from satellite measurements of the ice's surface velocity.
Here, we model $\beta$ to follow a log-normal distribution, i.e., $\theta := \log \beta \sim \mathcal{N}(\log(\beta_0), \mathcal{C}) =: \nu$, with $\beta_0$ inferred from the surface velocity data \cite{rignot2011ice, rignot2011measures} and a double-Laplacian Mat\'ern covariance operator (c.f., Appendix \ref{sec:appendix:PIG}).
We discretize the friction field $\beta$ and the log-friction field $\theta$ with linear finite elements on $D_{2D}$; our discretized parameter dimension is $d=10,635$.

We define our high-fidelity state for a log-friction field $\theta \sim \nu$
\begin{align*}
    \mathbf{u}_0(\theta) := \left(\begin{array}{c}
         \mathbf{v}_x(10^{\theta}) - \mathbf{v}_x(\beta_0) \\
         \mathbf{v}_y(10^{\theta}) - \mathbf{v}_y(\beta_0) 
    \end{array} \right) 
    \in \mathbb{R}^{212,700}
\end{align*}
as the change in ice velocity compared to the velocity $(\mathbf{v}_x(\beta_0), \mathbf{v}_y(\beta_0))^{\top}$ at the parametric mean $\theta_0 := \log(\beta_0)$.
In particular, $\mathbf{u}_0(\theta_0) = \mathbf{0} \in \mathbb{R}^n$ with $n:= 212,700$.
Using ISSM and initializing $\mathbf{v}_x(10^{\theta}), \mathbf{v}_y(10^{\theta})$ at precomputed $\mathbf{v}_x(\beta_0), \mathbf{v}_y(\beta_0)$, the computation of $\mathbf{u}_0(\theta)$ takes on average $14.8$ CPUmin.\footnote{Using eight cores on a dual-socket server equipped with two 64-core AMD EPYC 7H12 processors, totaling 128 physical cores (256 hardware threads) with a base clock speed of 2.60 GHz, and 2.0 TiB of RAM.
}
Figure~\ref{fig:PIG:mean} shows $\theta_0$, the surface layer of velocities $\mathbf{v}_x(\beta_0), \mathbf{v}_y(\beta_0)$, and the changes induced by a random sample $\theta \sim \nu$.

\begin{figure}
    \centering
    \includegraphics[width=\textwidth, trim={11.5cm 4.5cm 7cm 2cm}, clip]{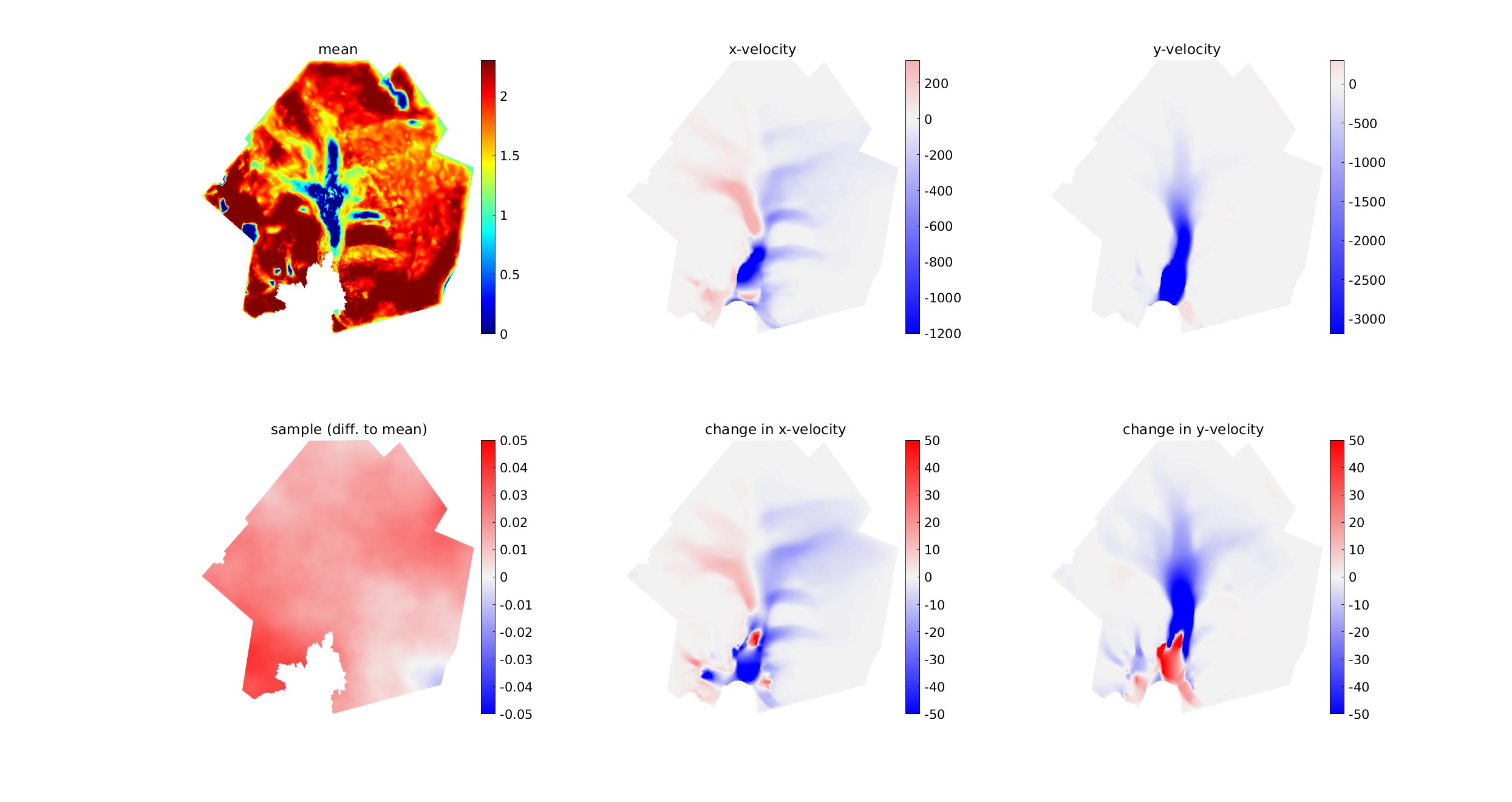}
    \caption{
    Top: Parametric mean of $\theta$ and corresponding ice velocity fields.
    Bottom: Difference between a random sample and parametric mean of $\theta$ (left), and induced changes in velocity fields.}
    \label{fig:PIG:mean}
\end{figure}

Our low-fidelity model employs a simplified physics representation through the Shallow-Shelf Approximation (SSA). SSA models approximations $\overline{v}_x, \overline{v}_y : D_{2D} \rightarrow \mathbb{R}$ to the depth-averaged velocity fields,
\begin{align*}
    \overline{v}_x(x, y) &\approx \frac{1}{h(x, y)} \int_{z=b(x,y)}^{s(x, y)} v_x(x, y, z) dz, &
    \overline{v}_y(x, y) &\approx \frac{1}{h(x, y)} \int_{z=b(x,y)}^{s(x, y)} v_y(x, y, z) dz,
\end{align*}
for all $(x,y)^{\top} \in D_{2D}$.
We refer to \cite{aretz2025multifidelity,larour2012continental,Paterson1994} for the governing equations, but note that while the basal friction field $\beta$ appears in the boundary condition \eqref{eq:PIG:HO:boundary} of the HO model, it is part of a forcing field in the SSA.
Denoting with $\overline{\mathbf{v}}_x(\beta), \overline{\mathbf{v}}_y(\beta) \in \mathbb{R}^{10,635}$ the numerical SSA solution for a given basal friction field, we define the low-fidelity state as
\begin{align}\label{eq:PIG:SSA:u1}
    \mathbf{u}_1(\theta) &:= \bigg(
    \underbrace{
    \overline{\mathbf{d}}_x^{\top}(\theta),
    \dots, 
    \overline{\mathbf{d}}_x^{\top}(\theta)
    }_{10 \times},
    \underbrace{
    \overline{\mathbf{d}}_y^{\top}(\theta),
    \dots, 
    \overline{\mathbf{d}}_y^{\top}(\theta)
    }_{10 \times}
    \bigg)^{\top} \in \mathbb{R}^{212,700}
\end{align}
with $\overline{\mathbf{d}}_x(\theta) := \overline{\mathbf{v}}_x(10^{\theta}) - \overline{\mathbf{v}}_x(\beta_0),
    \overline{\mathbf{d}}_y(\theta) := \overline{\mathbf{v}}_y(10^{\theta}) - \overline{\mathbf{v}}_y(\beta_0) \in \mathbb{R}^{10,635}$.
Here we have used that ISSM orders the finite element coefficient vectors for 3D-fields by layer such that \eqref{eq:PIG:SSA:u1} corresponds to depth-independent velocity fields.
The computation of $\mathbf{u}_1(\theta)$ takes on average $0.144$ CPUmin, using two cores on the same machine as the high-fidelity model. The low-fidelity SSA model thus has an average speed-up of 102$\times$ compared to the high-fidelity HO model.

Because model compute times vary between parameters, we specify the budget $\mybudget$ as the number of permitted high-fidelity solves rather than CPU-time.
This corresponds to the normalization $c_0 := 1$.
For the low-fidelity model, we set $c_1 := 1/72$ as the \textit{worst-case} ratio of high- to low-fidelity CPU-time we observed over a test run.
The resulting ratio $1:72$ of high- to low-fidelity snapshots for any given budget $\mybudget$ is conservative compared to the average $102\times$ speed-up, favoring the high-fidelity POD spaces $\mathbf{V}_0$ in our comparisons below.
For reference, we compute high-fidelity snapshots $\{\mathbf{u}_0(\theta_i^{\rm{ref}})\}_{i=1}^{10,000}$ for 10,000 i.i.d.\ samples $\theta_1^{\rm{ref}}, \dots, \theta_{10,000}^{\rm{ref}} \sim \nu$, and apply the method of snapshots to compute a reference POD basis $\mathbf{V}_{\rm{ref}}$.
Truncating modes after a drop in eigenvalues of more than $10^{-10}$, $\mathbf{V}_{\rm{ref}}$ contains 405 modes. The 50 dominant eigenvalues for this reference POD basis are shown in Figure~\ref{fig:PIG:eigvals:budget5}. Their slow decay highlights the large Kolmogorov n-width of this problem.

\begin{figure}
    \centering
    \includegraphics[width=\linewidth]{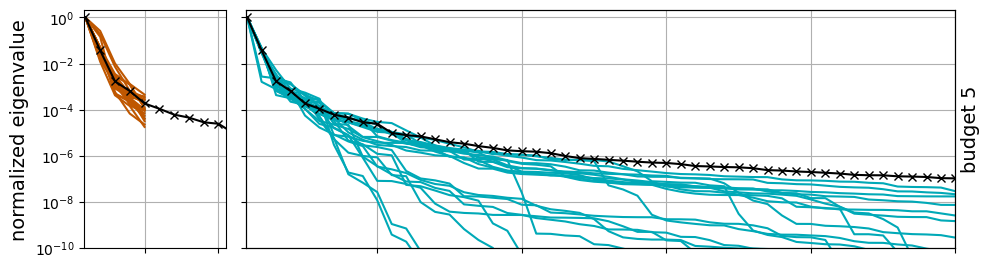}
    \includegraphics[width=\linewidth]{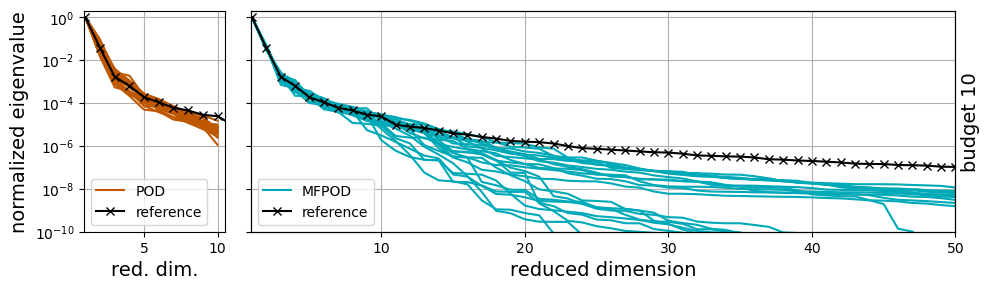}
    \caption{Estimated eigenvalues with high-fidelity POD and MFPOD for 20 independent runs for a budget for $\mybudget=5$ (top) and $\mybudget=10$ (bottom) high-fidelity solves, compared against the reference POD eigenvalues. All spectra are normalized w.r.t.\ their largest eigenvalue.}
    \label{fig:PIG:eigvals:budget5}
\end{figure}

To illustrate the performance of MFPOD, we first choose a computational budget $\mybudget=5$ of five high-fidelity simulations, and compute a high-fidelity POD basis $\mathbf{V}_0$ from five random snapshots of $\mathbf{u}_0$, 
and an MFPOD basis $\mathbf{V}_{\mymf}$ with $m_0 = 2$ high-fidelity and $m_1 = 216$ low-fidelity snapshots.
We repeat the basis computations twenty times with independent draws of the snapshot sets.
The resulting eigenvalues are plotted in the top plot of Figure~\ref{fig:PIG:eigvals:budget5}.
As the figure illustrates, with a budget of $\mybudget=5$, the single-fidelity POD can only identify $r=5$ modes with non-zero eigenvalues. In contrast, MFPOD with the same budget identifies many more modes.  
As shown in the bottom plot of Figure~\ref{fig:PIG:eigvals:budget5}, doubling the budget to $\mybudget=10$ permits POD to identify $r=10$ modes, while MFPOD identifies up to 85 modes (43.7 on average) and tracks well with the size of the reference subspace.

To assess the quality of the identified POD and MFPOD bases, Figure~\ref{fig:PIG:energy} compares the captured energy relative to the reference snapshot set. For comparison, we also include a single-fidelity POD basis computed using only low-fidelity snapshots, with $360$ random snapshots of $\mathbf{u}_1$ for $\mybudget=5$ and $720$ random snapshots of $\mathbf{u}_1$ for $\mybudget=10$. 
For the low-fidelity POD space $\mathbf{V}_1$ (left column), the captured energy plateaus at $99.985\%$ as the reduced dimension increases; increasing the number of snapshots does not change this behavior. 
We note that this plateau falls below the threshold that is typically used in reduced-order modeling to choose the reduced dimension (e.g., the threshold is typically set to $99.99\%$, $99.999\%$,  or more).
The plots clearly show that the POD space $\mathbf{V}_1$ built with the low-fidelity model alone cannot overcome the low-fidelity model's error.
In contrast, the POD space $\mathbf{V}_0$ is restricted through the computational budget, and cannot exceed $r=5$ ($\mybudget=5$) and $r=10$ ($\mybudget=10$).
The (average) captured energy at these dimensions is $99.855\%$ and $99.974\%$, below the plateau for $\mathbf{V}_1$.
The MFPOD method overcomes both restrictions and exceeds the low-fidelity plateau with an average energy captured of $99.995\%$ at $r=20$ for $\mybudget=5$ and $99.987 \%$ at $r=10$ for $\mybudget=10$.
While we do see some outliers of MFPOD for the smaller reduced dimensions, overall the distributions of the captured energy with MFPOD track well with the achievable maximum.

\begin{figure}
    \centering
    \includegraphics[width=\linewidth]{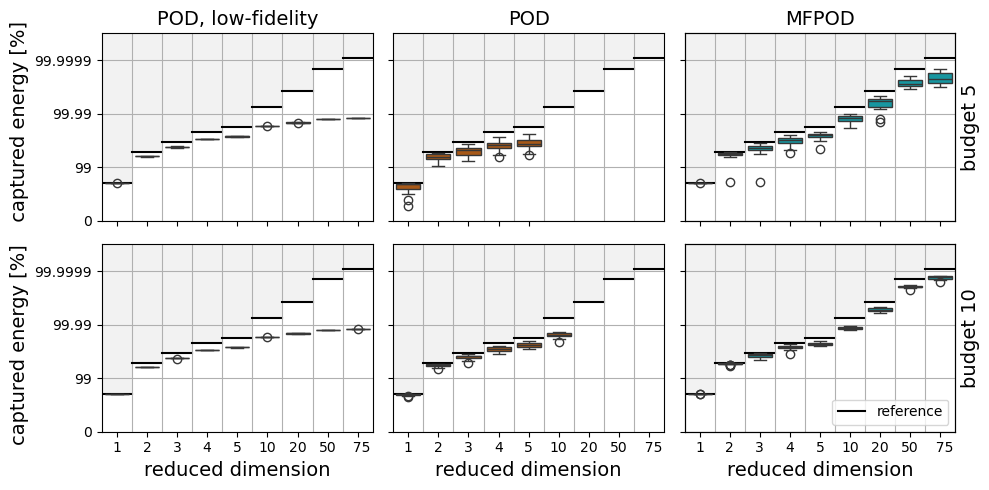}
    \caption{Boxplots of captured energy [\%] by reduced dimension using low-fidelity POD (left), high-fidelity POD (middle), and MFPOD (right) with a computational budget $\mybudget=5$ (top) and $\mybudget=10$ (bottom).
    Boxplots are over 20 independent runs.
    The reference lines mark the captured energy of the reference POD space $\mathbf{V}_{\rm{ref}}$ computed over a reference set of 10,000 snapshots.
    }
    \label{fig:PIG:energy}
\end{figure}

\begin{figure}
    \centering
    \includegraphics[width=\linewidth, trim={8cm 27cm 7cm 4cm},clip]{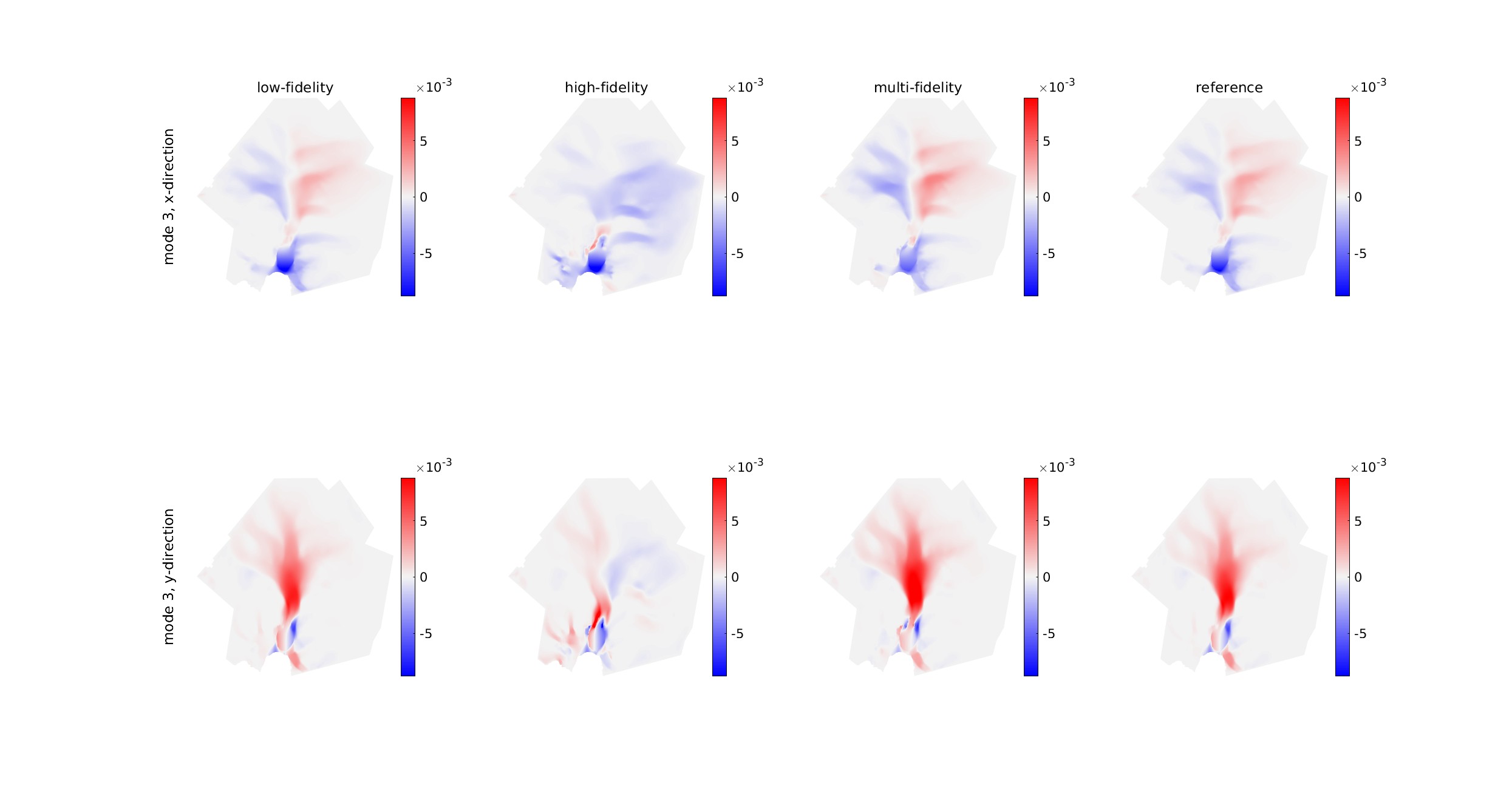}
    \includegraphics[width=\linewidth, trim={8cm 5cm 7cm 26cm},clip]{mode3.jpg}
    \caption{Comparison of the depth-averaged velocity in x- (top) and y-direction (bottom) of the third mode for a budget of five high-fidelity simulations.}
    \label{fig:PIG:mode:3}
\end{figure}

Figure~\ref{fig:PIG:mode:3} contrasts the depth-average of the third mode of $\mathbf{V}_{\rm{ref}}$ to those of $\mathbf{V}_1, \mathbf{V}_0$ and $\mathbf{V}_{\mymf}$ for one of the 20 independent runs.
With POD trained on high-fidelity samples, we already see strong mismatches between the third modes of $\mathbf{V}_1$ and $\mathbf{V}_{\rm{ref}}$, while the third low-fidelity and multifidelity mode both show good agreement.
The low-fidelity mode in particular is visually hardly distinguishable from the reference mode; this is to be expected because the SSA specifically models the depth-averaged velocity field.

\section{Conclusion}
\label{sec:conclusion}

In this work we introduced a multifidelity POD cost function that leverages snapshot information from both a computationally expensive high-fidelity model and cheaper but less accurate low-fidelity models to approximate the expected POD projection error at a lower computational cost.
The modes chosen by minimizing the multifidelity cost function converge in probability to the true POD modes and can be computed by solving an eigenvalue problem.
Our numerical comparisons with single-fidelity POD demonstrate that the MFPOD subspace both overcomes the model error of POD with low-fidelity samples and significantly reduces the generalization error compared to POD with few high-fidelity samples.
For a numerical model of the ice velocity of the Pine Island Glacier in Western Antarctica, MFPOD performed similarly to POD at a $10\times$ reduced offline sampling cost.
Future work will investigate optimal budget distribution strategies based on the eigenvalue decay of pilot samples from each model to further decrease the variance of the MFPOD estimator.

\appendix
\section{Proofs}\label{sec:appendix}

In this section we provide the proofs for Lemma \ref{thm:costfunction}, Theorem \ref{thm:convergence:C}, and Proposition \ref{thm:convergence:lambda}.
We start with Lemma \ref{thm:costfunction}, which connects the multifidelity cost function $\mathcal{J}_{\mymf}$ from \eqref{eq:Jmf:indefinite} to the eigenvalue-eigenfunction pairs of $\mathcal{C}_{\mymf}$ from \eqref{eq:Cmf}.

\begin{lemma}
    Define $\mathcal{C}_{\mymf} : H \rightarrow H$ through \eqref{eq:Cmf} and let $(\lambda_j, v_j) \in \mathbb{R} \times H$, $1 \le j \le \dim H \in \mathbb{N} \cup \{\infty\}$, be its eigenvalue-eigenfunction pairs, ordered such that $\lambda_1 \ge \dots \ge \lambda_r \ge \sup_{j > r} \lambda_j$. 
    Then $\mathcal{J}_{\mymf}(V) = \sum_{j=1}^{\dim H} \lambda_j (1-\| \Pi_V v_j\|_H^2)$.
In particular, $\text{span}(v_1, \dots, v_r) \in \argmin_{V \subset H, \dim V = r} \mathcal{J}_{\mymf}(V)$,
and the minimizer is unique iff $\lambda_r > \sup_{j > r} \lambda_j$.
\end{lemma}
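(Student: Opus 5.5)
The plan is to rewrite each summand of $\mathcal{J}_{\mymf}(V)$ through the operator $\mathcal{C}_{\mymf}$, expand in its eigenbasis, and then solve the resulting weighted problem by a Ky Fan type argument.

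\textbf{Step 1: trace form of the cost.} For any $u \in H$, since $\Pi_V$ is an orthogonal projection,
\begin{align*}
\|u - \Pi_V u\|_H^2 = \|u\|_H^2 - \|\Pi_V u\|_H^2 .
\end{align*}
Write $\Pi_{V_\perp} := I - \Pi_V$. Expanding $\|\Pi_{V_\perp} u\|_H^2 = \sum_{j} (u, \Pi_{V_\perp} v_j)_H^2$ is awkward, so I will use a different route. Pick an orthonormal basis $\{e_k\}$ of $V_\perp$, completed with an orthonormal basis of $V$ to a basis of $H$. Then
\begin{align*}
\|\Pi_{V_\perp} u\|_H^2 = \sum_k (u, e_k)_H^2 .
\end{align*}
Summing with the signed weights $\frac{1}{m_0}$, $\frac{\alpha_\ell}{m_\ell}$, $-\frac{\alpha_\ell}{m_{\ell-1}}$ that appear in \eqref{eq:Jmf:indefinite} and \eqref{eq:Cmf} gives
\begin{align*}
\mathcal{J}_{\mymf}(V) = \sum_k (\mathcal{C}_{\mymf} e_k, e_k)_H .
\end{align*}
Interchanging the finite sum over snapshots with the series over $k$ is legitimate because only finitely many snapshots occur and each series converges absolutely.

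\textbf{Step 2: eigenbasis expansion.} $\mathcal{C}_{\mymf}$ is compact, self-adjoint and of finite rank, so its eigenfunctions $\{v_j\}$ can be taken as an orthonormal basis of $H$, with only finitely many $\lambda_j \neq 0$. Hence
\begin{align*}
(\mathcal{C}_{\mymf} e, e)_H = \sum_j \lambda_j (e, v_j)_H^2 .
\end{align*}
Summing over $k$ and using Parseval, $\sum_k (e_k, v_j)_H^2 = \|\Pi_{V_\perp} v_j\|_H^2 = 1 - \|\Pi_V v_j\|_H^2$. Since only finitely many $\lambda_j$ are nonzero, all sums are finite and the exchange of summations is trivial. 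This gives \eqref{eq:MFPOD:series}.

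\textbf{Step 3: minimization.} Set $w_j := \|\Pi_V v_j\|_H^2 \in [0,1]$. Then $\sum_j w_j = \sum_i \|\Pi_{V_\perp}\cdots\|$ is more cleanly computed in an orthonormal basis $\{b_i\}_{i=1}^r$ of $V$: $\sum_j w_j = \sum_{i=1}^r \|b_i\|_H^2 = r$. It therefore suffices to maximize $\sum_j \lambda_j w_j$ over $w_j \in [0,1]$ with $\sum_j w_j = r$. Let $\lambda^\sharp := \sup_{j>r}\lambda_j$, so that $\lambda_i \ge \lambda^\sharp$ for $i \le r$. Then
\begin{align*}
\sum_j \lambda_j w_j - \sum_{i\le r}\lambda_i = -\sum_{i\le r}(\lambda_i - \lambda^\sharp)(1-w_i) + \sum_{j>r}(\lambda_j - \lambda^\sharp) w_j \le 0 .
\end{align*}
This uses $\sum_{i\le r}(1-w_i) = \sum_{j>r} w_j$. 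Equality holds for $V = \mathrm{span}(v_1,\dots,v_r)$, which proves that this span minimizes $\mathcal{J}_{\mymf}$.

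A technical point: $\lambda^\sharp$ might be a limit point of $0$ when $\dim H = \infty$. Because $\sup_{j>r}\lambda_j \ge 0$ in that case, the argument still works since the sums are finite.

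\textbf{Step 4: uniqueness.} This is the main obstacle. Suppose $\lambda_r > \lambda^\sharp$ and the inequality is an equality. Then $w_i = 1$ for all $i \le r$, which forces $v_i \in V$ and hence $V = \mathrm{span}(v_1,\dots,v_r)$. For the converse, suppose $\lambda_r = \lambda^\sharp$. Then either some $\lambda_j$ with $j > r$ equals $\lambda_r$, and swapping $v_r$ for $v_j$ gives another minimizer, or the supremum is not attained. In the latter case the supremum must be $0$ (the only accumulation point), so $\lambda_r = 0$. Infinitely many $v_j$ then have $\lambda_j = 0$, including one outside $\mathrm{span}(v_1,\dots,v_r)$ because $\dim H = \infty$, and swapping $v_r$ for it again yields a distinct minimizer. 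Care will be needed with how the eigenbasis is chosen in the degenerate case.
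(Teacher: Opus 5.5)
Your proof is correct, and the route matches the paper's for the identity while going further than the paper on the minimization claims.

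\textbf{Steps 1 and 2.} These are the paper's argument up to the choice of basis. Both write $\mathcal{J}_{\mymf}(V)$ as the trace of $\Pi_{V_\perp}\mathcal{C}_{\mymf}\Pi_{V_\perp}$ and expand in the eigenbasis $\{v_j\}$. The paper evaluates the trace on the projected vectors $\Pi_{V_\perp}v_j$, which is the expansion you set aside as awkward. You evaluate it on an orthonormal basis of $V_\perp$ and then apply Parseval. That makes the interchange of the finite eigen-sum with the series over $k$ a little more transparent, but the content is the same.

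\textbf{Steps 3 and 4.} Here you do more than the paper. The appendix proof stops once \eqref{eq:MFPOD:series} is established and never justifies the ``in particular'' claims. Your argument supplies what is missing. The constraints $0\le w_j\le 1$ and $\sum_j w_j=r$, together with the $\lambda^\sharp$-shift identity, form exactly the Ky Fan type argument needed for minimality. Your analysis of the equality case then gives both directions of the uniqueness statement.

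\textbf{Cleanup in Step 4.} Because $\mathcal{C}_{\mymf}$ has finite rank, the set $\{\lambda_j : j>r\}$ takes only finitely many values, with $0$ of infinite multiplicity when $\dim H=\infty$. So $\sup_{j>r}\lambda_j$ is always attained, and the ``supremum not attained'' branch is vacuous. Your own description of that branch, with infinitely many indices $j>r$ having $\lambda_j=0=\lambda^\sharp$, already contradicts non-attainment. Drop that branch and use the swap argument alone.

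\textbf{Minor points.} The degenerate case $r=\dim H<\infty$, where $\lambda^\sharp=-\infty$, is trivially unique and should be set aside before the shift identity in Step 3. The garbled sentence introducing $\sum_j w_j$ should be replaced by the clean computation $\sum_j w_j=\sum_{i=1}^r\|b_i\|_H^2=r$ that you give right after it.
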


\begin{proof}
Because $H$ is separable, and because $\mathcal{C}_{\mymf}$ is self-adjoint with finite-di\-men\-sion\-al range (and is hence compact), the eigenfunctions $\{v_j\}_{j=1}^{\dim H}$ form an orthonormal basis of $H$.
Thus, for $0 \le \ell \le L$ and $1 \le i \le m_{\ell}$,
\begin{align*}
    \|u_{\ell}(\theta_i)-\Pi_V u_{\ell}(\theta_i)\|_H^2
    = \sum_{j=1}^{\dim H} (\Pi_{V_{\perp}} u_{\ell}(\theta_i), v_j)_H^2
    = \sum_{j=1}^{\dim H} ( u_{\ell}(\theta_i), \Pi_{V_{\perp}} v_j)_H^2
\end{align*}
where $V_{\perp}$ is the orthogonal complement of $V$ in $H$.
Then
\begin{align*}
    &\mathcal{J}_{\mymf}(V)
    = \sum_{j=1}^{\dim H}(\mathcal{C}_{\mymf} \Pi_{V_{\perp}} v_j, \Pi_{V_{\perp}} v_j)_H 
    = \sum_{j=1}^{\dim H} \sum_{k_1=1}^{\dim H}(\Pi_{V_{\perp}} v_j, v_{k_1})_H(\mathcal{C}_{\mymf} v_{k_1}, \Pi_{V_{\perp}} v_j) \\
    &\quad= \sum_{j=1}^{\dim H} \sum_{k_1, k_2=1}^{\dim H} (\Pi_{V_{\perp}} v_j, v_{k_1})_H (\Pi_{V_{\perp}} v_j, v_{k_2})_H (\mathcal{C}_{\mymf}v_{k_1}, v_{k_2})_H \\
    &\quad= \sum_{j=1}^{\dim H} \sum_{k=1}^{\dim H} \lambda_{k} (\Pi_{V_{\perp}} v_j, v_k)_H^2 
    = \sum_{k=1}^{\dim H} \lambda_k \|\Pi_{V_{\perp}} v_k\|_H^2 = \sum_{j=1}^{\dim H} \lambda_j (1-\| \Pi_V v_j\|_H^2).
\end{align*}
\end{proof}

We next show the proof of Theorem \ref{thm:convergence:C}, which states that there exists a constant $\gamma < \infty$ such that $\mathbb{E}[\|\mathcal{C}_{\mymf}-\mathcal{C}\|_{\rm{HS}}^2] = \gamma / m_0$ for all $m_0 \in \mathbb{N}$.
Note that the following proof gives an explicit expression for $\gamma$.

\begin{proof}[Proof of Theorem \ref{thm:convergence:C}]
Let $m_0 \in \mathbb{N}$ be arbitrary, and abbreviate $\mathcal{C}_{\mymf} := \mathcal{C}_{\mymf}(m_0)$.
For $\ell = 0, \dots, L$ and $j,k=1, \dots, \dim H$, define $f_{\ell}^{(j,k)}(\theta):= (u_{\ell}(\theta), v_j^*)_H(u_{\ell}(\theta), v_k^*)_H$ for brevity of notation.
Noting that for arbitrary integers $j, k$ with $1 \le j, k \le \dim H$
\begin{align*}
    (\mathcal{C}_{\mymf}v_j^*, v_k^*)_H
    &= \frac{1}{m_0} \sum_{i=1}^{m_0} f_{0}^{(j,k)}(\theta_i)
    + \sum_{\ell=1}^L \bigg(
    \frac{\alpha_{\ell}}{m_{\ell}} \sum_{i=1}^{m_{\ell}} f_{\ell}^{(j,k)}(\theta_i)
    - \frac{\alpha_{\ell}}{m_{\ell-1}} \sum_{i=1}^{m_{\ell-1}} f_{\ell}^{(j,k)}(\theta_i)
    \bigg)
\end{align*}
is a multifidelity estimator with expectation 
\begin{align*}
    \mathbb{E}[(\mathcal{C}_{\mymf}v_j^*, v_k^*)_H]
    = \mathbb{E}_{\theta}[f_{0}^{(j,k)}(\theta)]
    = \mathbb{E}_{\theta}[(u_0(\theta), v_j^*)_H(u_0(\theta), v_k^*)_H]
    = (\mathcal{C}v_j^*, v_k^*)_H,
\end{align*}
we obtain from \cite{peherstorfer2016optimal} (Lemma 3.3) that
\begin{align*}
    &\mathbb{E}[((\mathcal{C}_{\mymf}-\mathcal{C})v_j^*, v_k^*)_H^2]
    = \mathbb{V}[(\mathcal{C}_{\mymf}v_j^*, v_k^*)_H] \\
    &= \frac{1}{m_0}
    \mathbb{V}[f_{0}^{(j,k)}] + \sum_{\ell = 1}^L \bigg(
    \frac{1}{m_0q_{\ell-1}} - \frac{1}{m_0q_{\ell}}
    \bigg)\bigg(
    \alpha_{\ell}^2\mathbb{V}[f_{\ell}^{(j,k)}] - 2\alpha_{\ell} \text{Cov}(f_{0}^{(j,k)}, f_{\ell}^{(j,k)})
    \bigg) \\
    &= \frac{1}{m_0}\bigg(\mathbb{V}[f_{0}^{(j,k)}] 
    + \sum_{\ell = 1}^L \bigg(
    \frac{1}{q_{\ell-1}} - \frac{1}{q_{\ell}}
    \bigg)\bigg(
    \mathbb{V}\big[f_{0}^{(j,k)}-\alpha_{\ell}f_{\ell}^{(j,k)}\big] - \mathbb{V}\big[f_{0}^{(j,k)}\big]
    \bigg)
    \bigg) \\
    &= \frac{1}{m_0}\bigg(
    \frac{1}{q_L}\mathbb{V}[f_{0}^{(j,k)}] 
    + \sum_{\ell = 1}^L \bigg(
    \frac{1}{q_{\ell-1}} - \frac{1}{q_{\ell}}
    \bigg) \mathbb{V}\big[f_{0}^{(j,k)}-\alpha_{\ell}f_{\ell}^{(j,k)}\big] \bigg).
\end{align*}
We define the $m_0$-independent constant
\begin{align}\label{eq:gamma}
    \gamma := \sum_{j,k=1}^{\dim H} \bigg(
    \frac{1}{q_L}\mathbb{V}[f_{0}^{(j,k)}] 
    + \sum_{\ell = 1}^L \bigg(
    \frac{1}{q_{\ell-1}} - \frac{1}{q_{\ell}}
    \bigg) \mathbb{V}\big[f_{0}^{(j,k)}-\alpha_{\ell}f_{\ell}^{(j,k)}\big] \bigg) \ge 0.
\end{align}
Note that $\gamma < \infty$ because, for each $\ell = 0, \dots, L$, 
\begin{align*}
    \sum_{j,k=1}^{\dim H} \mathbb{V}[f_{0}^{(j,k)}]
    &\le \sum_{j,k=1}^{\dim H} \mathbb{E}[(f_{0}^{(j,k)})^2] 
    = \mathbb{E}\bigg[\sum_{j,k=1}^{\dim H}  (u_{\ell}(\theta), v_j^*)_H^2 (u_{\ell}(\theta), v_k^*)_H^2\bigg] \\
    &= \mathbb{E}\bigg[\bigg(
    \sum_{j=1}^{\dim H}  (u_{\ell}(\theta), v_j^*)_H^2
    \bigg)\bigg(
    \sum_{k=1}^{\dim H}  (u_{\ell}(\theta), v_k^*)_H^2
    \bigg)\bigg] 
    = \mathbb{E}\big[\|u_{\ell}\|_H^4\big] < \infty,
\end{align*}
and $\mathbb{V}\big[f_{0}^{(j,k)}-\alpha_{\ell}f_{\ell}^{(j,k)}\big] \le 2\mathbb{V}\big[f_{0}^{(j,k)}\big] + 2 \alpha_{\ell}^2\mathbb{V}\big[f_{\ell}^{(j,k)}\big]$.

To show the claim \eqref{eq:convergence:C}, we first note that $\mathcal{C}$ is trace-class because $\mathbb{E}[\|u_0(\theta)\|_H^2] < \infty$, and that, for any draw of samples the range of $\mathcal{C}_{\mymf}$ is finite-dimensional.
Thus $\|\mathcal{C}_{\mymf}-\mathcal{C}\|_{\rm{HS}} < \infty$.
Taking the expectation yields
\begin{align*}
    \mathbb{E}\big[\|\mathcal{C}_{\mymf}-\mathcal{C}\|_{\rm{HS}}^2\big]
    &= \mathbb{E}\bigg[\sum_{j,k=1}^{\dim H} ((\mathcal{C}_{\mymf}-\mathcal{C})v_j^*, v_k^*)_H^2\bigg]
    = \sum_{j,k=1}^{\dim H} \mathbb{V}[(\mathcal{C}_{\mymf}v_j^*, v_k^*)_H] 
    = \frac{1}{m_0} \gamma .
\end{align*}
The result follows because $m_0$ was arbitrary and $\gamma < \infty$ is independent of $m_0$.
\end{proof}

Building upon Theorem \ref{thm:convergence:C}, we prove Proposition \ref{thm:convergence:lambda}, providing upper bounds for the MSEs $\mathbb{E}[ (\sum_{j=1}^r \lambda_j -\sum_{j=1}^r \lambda_j^*)^2 ]$ and $\mathbb{E}[(\sum_{j=1}^r \|v_j - \Pi_{V_*}v_j\|_H^2)^2]$.

\begin{proof}[Proof of Proposition \ref{thm:convergence:lambda}]
Let $m_0 \in \mathbb{N}$ be arbitrary, and define the linear, self-adjoint operator $\mathcal{E} := \mathcal{C} - \mathcal{C}_{\mymf}(m_0) : H \rightarrow H$.
Let $\delta_j \in \mathbb{R}$, $1 \le j \le \dim H$, be its eigenvalues ordered such that $|\delta_1| \ge |\delta_2| \ge \dots \ge 0$.
If $\sum_{j=1}^r \lambda_j^* < \sum_{j=1}^r \lambda_j$,
then
\begin{align*}
    \bigg|
    \sum_{j=1}^r \lambda_j^* - \sum_{j=1}^r \lambda_j
    \bigg|
    &= \sum_{j=1}^r (\mathcal{C}_{\mymf}v_j, v_j)_H - \sum_{j=1}^r (\mathcal{C}v_j^*, v_j^*)_H \\
    &= \sum_{j=1}^r (\mathcal{C}_{\mymf}v_j, v_j)_H - \sup_{\substack{w_1, \dots, w_r\\\text{orthonormal}}} \sum_{j=1}^r (\mathcal{C}w_j, w_j)_H \\
    &\le -\sum_{j=1}^r (\mathcal{E}v_j, v_j)_H 
    \le \sum_{j=1}^r |(\mathcal{E}v_j, v_j)_H|
    \le \sum_{j=1}^r |\delta_j|,
\end{align*}
where we have used that $v_1, \dots, v_r$ are orthonormal.
For $\sum_{j=1}^r \lambda_j^* > \sum_{j=1}^r \lambda_j$,
then $|\sum_{j=1}^r \lambda_j^* - \sum_{j=1}^r \lambda_j| \le \sum_{j=1}^r |\delta_j|$ follows analogously.
Using that, by the Cauchy-Schwarz inequality, $(\sum_{j=1}^r |\delta_j|)^2 \le r \sum_{j=1}^r |\delta_j|^2 \le r \sum_{j=1}^{\dim H} |\delta_j|^2 = r \|\mathcal{\mathcal{E}}\|_{\rm{HS}}^2$,
\eqref{eq:bound:eigvals} follows from Theorem \ref{thm:convergence:C}.

To prove \eqref{eq:bound:eigvecs}, we first use the triangle inequality to bound
\begin{equation}\label{eq:bound:eigvecs:proof:upper}
    \begin{aligned}
    \bigg|\sum_{j=1}^r (\mathcal{C}v_j, v_j)_H - \sum_{j=1}^r\lambda_j^*\bigg| 
    &\le \bigg|\sum_{j=1}^r (\lambda_j - \lambda_j^*)\bigg| + \sum_{j=1}^r 
    |(\mathcal{E} v_j, v_j)_H| 
    \le 2 \sum_{k=1}^r |\delta_k|.
\end{aligned}
\end{equation}
In the reverse direction, we use that $\{v_k^*\}_{k=1}^{\dim H}$ is an orthonormal basis of $H$ and that $\|v_1\|_H = \dots = \|v_r\|_H = 1$ for
\begin{align*}
&\bigg|\sum_{j=1}^r (\mathcal{C}v_j, v_j)_H -\sum_{j=1}^r
     \lambda_j^*\bigg| 
= \bigg|\sum_{j=1}^r 
     \lambda_j^* - \sum_{j=1}^r \sum_{k=1}^{\dim H} (v_j, v_k^*)_H (\mathcal{C}v_k^*, v_j)_H \bigg| \\
&\qquad \ge \sum_{j=1}^r 
     \lambda_j^*- \sum_{j=1}^r\sum_{k=1}^{r} \lambda_k^*(v_j, v_k^*)_H^2 - \sum_{j=1}^r\sum_{k=r+1}^{\dim H} \lambda_k^*(v_j, v_k^*)_H^2 \\
&\qquad \ge \sum_{k=1}^r 
     \lambda_k^* \big(1-\sum_{j=1}^r (v_j, v_k^*)_H^2\big)
     - \lambda_{r+1}^*\sum_{j=1}^r\sum_{k=r+1}^{\dim H} (v_j, v_k^*)_H^2 \\
&\qquad \ge \lambda_r^* \sum_{k=1}^r \big(1-\sum_{j=1}^r (v_j, v_k^*)_H^2\big)
     - \lambda_{r+1}^*\sum_{j=1}^r\big(1-\sum_{k=1}^{r} (v_j, v_k^*)_H^2\big) \\
&\qquad = (\lambda_r^*-\lambda_{r+1}^*)\sum_{j=1}^r \bigg(1 -  \sum_{k=1}^{r} (v_j, v_k^*)_H^2\bigg) 
= (\lambda_r^*-\lambda_{r+1}^*)\sum_{j=1}^r \|v_j - \Pi_{V_*}v_j\|_H^2.
\end{align*}
The claim \eqref{eq:bound:eigvecs} then follows with \eqref{eq:bound:eigvecs:proof:upper} and Theorem \ref{thm:convergence:C}.
\end{proof}

\section{Pine Island glacier datasets}\label{sec:appendix:PIG}

We provide additional information on the datasets used for the Pine Island glacier example in Section~\ref{sec:results:pig}.
The surface altitude and ice thickness fields $s$ and $h$ were obtained from a ten year spin-up run initialized at the satellite surface altitude dataset \cite{bamber2009new, griggs2009newPart2} with the average surface-mass-balance field from \cite{vaughan1999reassessment} and basal topography from \cite{nitsche2007bathymetry}.
The effective ice viscosity $\mu$ is modeled with Glen's flow law $\mu = \frac{B}{\sqrt{2}}\| (\dot{\varepsilon}_{\rm{HO}, 1}, \dot{\varepsilon}_{\rm{HO}, 2}) \|^{-2/3}$ with ice rigidity $B$ computed from the temperature field in \cite{comiso2000variability}.
The effective pressure $N$ was computed according to Budd's friction law \cite{Budd1979}.
The local resolution of the mesh was chosen based on the surface velocity data from \cite{rignot2011ice, rignot2011measures} and is similar to the resolution used in the sensitivity study \cite{seroussi2014sensitivity}.
The log-friction mean $\beta_0$ was inferred from the surface velocity data \cite{rignot2011ice, rignot2011measures}.
The covariance operator is a double-Laplacian Mat\'ern covariance operator (see \cite{VillaPetraGhattas21} with parameters $\gamma=1, \delta=8$).

\section*{Acknowledgments}
We would like to thank  Anirban Chaudhuri, Tiangang Cui, Judy Hao, Graham Pash, and other members of the Willcox research group for fruitful discussions.
This work was supported in parts by the Department of Energy grant DE-SC002317, Air Force Office of Scientific Research MURI grant FA9550-24-1-0327, and the National Science Foundation grant \#2103942.

\bibliographystyle{plain}
\bibliography{references}
\end{document}